\newtheorem{Proposition}{Proposition}
  \newtheorem{Remark}[Proposition]{Remark}
  \newtheorem{Corollary}[Proposition]{Corollary}
  \newtheorem{Lemma}[Proposition]{Lemma}
  \newtheorem{Theorem}[Proposition]{Theorem}
 \newtheorem{Definition}[Proposition]{Definition}
 \newtheorem{Note}[Proposition]{Note}
\newcommand {\z}{{\noindent}}
\def\CC{\mathbb{C}}
 \def\RR{\mathbb{R}}
 \def\NN{\mathbb{N}}
\def\ZZ{\mathbb{Z}}
\def\Re{\mathrm{Re\,}}
\def\Im{\mathrm{Im\,}}
\def\ds{\displaystyle}
 \def\({\left(} \def\){\right)} \makeindex
\author{O. Costin, M. Huang and F. Fauvet} \address{Mathematics Department The Ohio State University Columbus, OH 43210} \address{Department of Mathematics,
The University of Chicago, 5734 S. University Avenue
Chicago, Illinois 60637}\address{IRMA, Universit\'e de Strasbourg et CNRS, 67084 Strasbourg, France} \title{Global behavior of solutions of nonlinear ODEs: first order equations}
\begin{document}
\gdef\shorttitle{Global behavior of 1st order ODEs}
\maketitle
$ $ \vskip -0.2cm
\begin{abstract}
  We determine the behavior of the general solution, small or large,
  of nonlinear first order ODEs in a neighborhood of an irregular
  singular point chosen to be infinity.  We show that the solutions can be
  controlled in a ramified neighborhood of infinity using a finite set of
  asymptotic constants of motion; the asymptotic formulas can be
  calculated to any order by quadratures. These constants of motion
  enable us to obtain qualitative and accurate quantitative
  information on the solutions in a neighborhood of infinity, as well
  as to determine the position of their singularities. We discuss how
  the method extends to higher order equations. There are some
  conceptual similarities with a KAM approach, and we discuss this
  briefly.
\end{abstract}
\section{Introduction}
The point at infinity is most often an {\it irregular singular point}
for equations arising in applications.\footnote{A
  singular point of an equation is irregular if, for {\em small}
  solutions, the linearization is not of Frobenius type. By a small
  solution we mean one that tends to zero in some direction after
  simple changes of coordinates.} Within this class of equations, there
  are essentially two types for which a global description of
  solutions exists: linear systems and integrable ones. However, in a
  stricter sense, even for some linear problems global questions such
  as explicit values of connection coefficients are still open. The
  behavior of the general solutions of {\em linear} ODEs has been
  thoroughly analyzed starting in the late 19th century, see
  \cite{Fabry} and \cite{Wasow} and references therein.  After the
  pioneering work of \'Ecalle, Ramis, Sibuya and others the
  description of their solutions in $\CC$ is by now quite well
  understood \cite{Ecalle,Ecalle-book,Balser, Balser3, Braaksma,
    Ramis1, Ramis2, Duke}.

{\em Integrable} systems provide another important class of systems allowing
for global description of solutions. The ensemble of integrable
systems is a zero measure set in the parameter space of general
equations: a generic small perturbation of an integrable system
destroys integrability. Nonetheless, integrable equations occur
remarkably often in many areas of mathematics, such as orthogonal polynomials, the analysis of the
Riemann-zeta function,  random matrix theory,
self-similar solutions of integrable PDEs and combinatorics, cf.
\cite{Bleher},\cite{Deift3}--\cite{Deift1}, \cite{Ablowitz, Fokas3,
  Calogero, Conte, Deift1}, \cite{Fokas}--\cite{zak}. However, even in
integrable systems, achieving global control of solutions in a {\em
  practical way} is a challenging task, and it is one of the important
aims of the emerging Painlev\'e project \cite {Painleve22}.

In {\em nonintegrable} systems, particularly near irregular singularities,
our understanding is much more limited. Small solutions are given by
generalized Borel summable {\em transseries}; this was discovered by
\'Ecalle in the 1980s and proved rigorously in many contexts
subsequently. Transseries are essentially formal multiseries in
powers of $1/x^{k_i}$
$e^{-\lambda_j x}$,   and possibly $x^{-1}\log
x$; see again \cite{Ecalle,Ecalle-book,Balser, Balser3, Braaksma, Ramis1, Ramis2,  Duke} and \cite{OCBook}. Here $x$ is the independent variable and $\lambda_j$
are eigenvalues of the linearization with the property
$\Re (\lambda_j x)>0$.
In general, {\em only} small solutions are well understood. However,
for generic nonlinear systems of higher order, small solutions form
lower dimensional manifolds in the space of all solutions, see, e.g.,
\cite{Duke}. The present understanding of general nonlinear equations
is thus quite limited.

We introduce a new line of approach, combining ideas from generalized
Borel summability and KAM theory (see, e.g. \cite{Arnold}) for the
analysis near infinity, chosen to be an irregular singular point, of
solutions of relatively general differential equations with
meromorphic coefficients.  Applying the method does not require
knowledge of Borel summability, transseries or KAM theory.

\smallskip

\bigskip For small solutions, in \cite{Invent} it was shown that in a
region adjacent to the sector where the solution,  $y$, is small,
$y(x)$ is almost periodic. In this sense $y$ becomes an approximately
cyclic variable. In the $x$-complex plane, the singular points of $y$
are arranged in quasi-periodic arrays as well. The analysis in
\cite{Invent} covers an angularly small region beyond the sector
where $y$ is small.  Looking directly at the
asymptotics of $y$ beyond this region would require a multiscale
approach: $y$ has a periodic behavior--the fast scale, with $O(1/x)$
changes in the quasi-period.  Multiscale analysis is usually a quite
involved procedure (see, e.g., \cite{Bender}).

It is natural  to make a hodograph transformation
in which the dependent and independent variables are switched. As mentioned
above, in the ``nontrivial'' regions, the dependent variable is an almost cyclic one.
The setting becomes
somewhat similar to a KAM one: there is an
underlying completely integrable system, and one looks for persistence
of invariant tori. Adiabatic invariants are simply the conserved
quantities associated with these tori. Evidently there are many
differences between the ODE setting and the KAM one, for instance the fact
that the small parameter is ``internal'', $1/x$.

In this work we restrict the analysis to first order equations, mainly
to ensure a transparent and concrete analysis. In theory however, the
method generalizes to equations of any order, and we touch on these
issues at the end of the paper.

We look at equations which, after normalization, are of the form
$dy/dx=F(z,y)$, $z=1/x$, with $g$ bi-analytic at ${\bf 0}$ and $F_y({\bf
  0})=1$.

We show that in any sector on  Riemann surfaces towards infinity,  the
{\em general} solution is represented by transseries and/ or, in an
implicit form, by some constant of motion.
In fact, on large circles around $x=0$, the solution cycles
among transseries representations and ones in which constants
of motion describe it accurately. The regions where these behaviors occur
overlap slightly to allow for asymptotic matching (cf. Corollary \ref{C1}). The connection
between the large $x$ behavior and the initial condition is
relatively easy to obtain.

Let $\beta=F_{zy}({\bf 0})$. The constants of motion have asymptotic expansions of the form
\begin{equation}
  \label{eq:eqinit}
  C(x,y)\sim x-\beta\log x+F_0(y)+x^{-1}F_1(y)+\cdots+x^{-j}F_j(y)+\cdots, \ \ x\to\infty
.\end{equation}
 Clearly, under the assumptions above, the solution
$y$ can be obtained asymptotically from \eqref{eq:eqinit} and the
implicit function theorem.  The requirement that $C$ is to leading
order of the form $f_1(x)+f_2(y)$, determines $C$ up to trivial
transformations, see Theorem \ref{T1} and Note
\ref{N1}.

The functions $F_j$ are shown in the proof of Theorem \ref{T1} to
solve first order autonomous ODEs, and thus they can always be
calculated by quadratures.

To illustrate this, we use a nonintegrable  Abel equation,
\begin{equation}
  \label{eq:Abel}
  u'=u^3-t
.\end{equation}

We note that there is no consensus on
how nonintegrabilty should be defined; for (\ref{eq:Abel}), it is the case that the equation passes no criterion of integrability, including the
poly-Painlev\'e test, and that there are
no solutions known, explicit or coming from, say,  some associated Riemann-Hilbert reformulation.

The Abel equation has the normal form (see \S\ref{sable}, where further details about this example are given)
\begin{gather}
  \label{tr12}
y'+3y^3-\frac{1}{9}+\frac{1}{5x}y=0
.\end{gather}
Regions of smallness are those for which  $y$ approaches a
root of $3y^3-1/9$; in these regions, $y$ is given by a transseries \cite{Invent}. Otherwise, $y$ has an implicit representation of the form
\begin{multline}
\label{newton0}
y=\frac{1}{3}\exp\bigg(-C-x+\frac{1}{5}\log x+\left(\sqrt{3}-\frac{2\sqrt{3}}{5x}\right)\arctan \left(\frac{6y+1}{\sqrt{3}}\right)\\
-\log(3y-1)+\frac{1}{2}\log(9y^2+3y+1))+\frac{1}{x}\left(\frac{27y^2}{5(1-27y^3)}+\frac{1}{25}+O(1/x)\right)\bigg)+\frac{1}{3}
,\end{multline}
 obtained by inverting an appropriate constant of motion $C$ (see \eqref{newton});
for the values of $\beta, F_0, F_1$ see \S\ref{S4}.

While in a numerical approach to calculating solutions the precision
deteriorates as $x$ becomes large, the accuracy of \eqref{eq:eqinit}
instead, {\em increases}.  In examples, even when \eqref{eq:eqinit} is
truncated to two orders, \eqref{eq:eqinit} is strikingly close to the
actual solution even for relatively small values of the independent
variable, see e.g. Figure\,\,\ref{fig:abel4}.

The procedure allows for a convenient way
to link initial conditions to global asymptotic behavior, see
e.g. \eqref{asing}.

 \subsection{Solvability versus integrability}

 First order equations for which the associated second order
 autonomous system is Hamiltonian are in particular {\em
   integrable}. Indeed, by their definition, there is a globally
 defined smooth $H$ with the property that $\dot{x}\frac{\partial
   H}{\partial x}+\dot{y}\frac{\partial H}{\partial y}=0$, that is
 $H(x(t),y(t))=const$, providing a closed form implicit, global
 representation of $y$.  While the differential equation provides
 ``infinitesimal'' information, $H$ --effectively an integral--
 provides a global one.

 Conversely, clearly, if there exists an implicit solution of the
 equation or indeed a smooth enough conserved quantity, the equation
 comes from a Hamiltonian system.

 What we provide is a finite set of matching conserved quantities,
 analogous to an atlas of overlapping maps projecting the differential
 field onto the trivial one, $H'=0$. They give, in a sense, a {\em
   foliation of the phase space} allowing for global control of
 solutions.  With obvious adaptations, this picture extends to higher
 order systems. In integrable systems there is just one single-valued
 map and the field is globally rectifiable. In general, the conserved
 quantities may be branched and not globally defined.

\subsection{Normalization and definitions}\label{Sec11}  Many equations
of the form $y'=F(y,1/x)$ with $F$ analytic for small $y$ and
small $1/x$  can be brought to the normal form
$y'=P_0(y)+Q(y,1/x)$ by systematic changes of variables,  see {\em e.g.} \cite{Duke}, \cite{OCBook}.

The
assumptions are that $Q(y,z)$ is entire in $y$ and analytic
in $z$ for small $z$, and $O(y^2,yz^2,z)$ for small $y$ and $z$ and that
$P_0$ is a polynomial. We assume that the roots of $P_0$ are {\em simple}.
It will be seen from the analysis that a more general $P_0$
can be accommodated. We thus write the equation as
\begin{equation}
  \label{eq:eqy0}
  y'=\sum_{k=0}^{\infty}\frac{P_k(y)}{x^k}=Q_1(y,1/x)=P_0(y)+Q(y,1/x)
.\end{equation}

\label{D1}

\begin{Definition}\label{def1}
$\bullet$   A formal constant of motion of \eqref{eq:eqy0} for $x\to \infty$ in an unbounded domain $\mathcal{D}\subset\mathbb{C}^2$ or on a Riemann surface covering it, and in which to leading order in $1/x$ the variables $x$ and $y$ are separated additively is a formal series
 \begin{equation}
    \label{eq:eq0}
    \tilde{C}(y,x)=A(x) +F_0(y)+\frac{F_1(y)}{x}+\cdots+\frac{F_j(y)}{x^j}+\cdots
  \end{equation}
such that  we have
$$\frac{d}{dx}\tilde{C}(y(x),x)=O(x^{-\infty})$$
in the sense that, for any $j$, $F_j$ and $H_j$ defined by
\begin{equation}
  \label{eq:defC}
\frac{H_{j+1}(x,y)}{x^{j+1}}:=  A'(x) +D_x \left(F_0(y)+\frac{F_1(y)}{x}+\cdots+\frac{F_j(y)}{x^j}\right)
\end{equation}
are uniformly bounded in $\mathcal{D}$; here $D_x$ is the derivative
along the field, $$D_x F(x,y)=\nabla F\cdot
(1,Q_1)=F_x(x,y)+F_y(x,y)Q_1(y,1/x).$$ See also \eqref{formalexpans}
below.

$\bullet$ An actual  constant of motion associated to
$\tilde{C}$ in $\mathcal{D}\subset\CC^2$ is a function $C$ so that
$C(y,x)\sim\tilde{C}(y,x)$ as $x\to\infty$ and
$\frac{d}{dx}C(y(x),x)=0$ for all solutions in $\mathcal{D}$.
\end{Definition}
\begin{Note}\label{N1}
 It will be seen that there is rigidity in the form of the constant of motion: if the variables in $\tilde{C}$ are, to leading order,  separated additively as in \eqref{eq:eq0},
then, up to trivial transformations, we must have
\begin{equation}
  \label{eq:eqA}
  A(x)=-x+a\log x
\end{equation}
where $a$ is the same as the one in the transseries
expansion of the solution, see Proposition~\ref{trans}. \end{Note}

\bigskip
\subsection{Finding the terms in the expansion of $\tilde{C}$}
\z Using (\ref{eq:eqA})  and truncating \eqref{eq:eq0} at an arbitrary $n>2$, let
\begin{equation}
  \label{eq:def2c}
  {C}_n(y,x)=:-x+a \log x+F_0(y)+\sum_{k=1}^{n}\frac{F_k(y)}{x^k}
.\end{equation}
We can check that $D_x C_n$ satisfies
\begin{multline}\label{formalexpans}
D_x C_n=-1+P_0F_0'+\frac{a+P_1F_0'+P_0F_1'}{x}\\
+\sum_{k=2}^{n}\frac{(1-k)F_{k-1}+\sum_{j=0}^{k}P_{k-j}F'_{j}}{x^k}
+\frac{-nF_n+\sum_{j=0}^{n}\sum_{k=0}^{\infty}P_{n+k+1-j}F'_{j}x^{-k}}{x^{n+1}}
\end{multline}
(cf. \eqref{eq:eqy0}) where the numerator of the last term is $H_n$ by definition. In order for $\tilde{C}$ to be a formal constant of motion, the coefficients of $x^{-j},j=0,1,2,\ldots$ must vanish, giving
\begin{align}\label{refF}
F_0'(y)&=\frac{1}{P_0(y)}\\
F_1'(y)&=-\frac{a+P_1(y)F_0'(y)}{P_0(y)} \label{refF1}\\
\label{dfk}
F_k'(y)&=\frac{(k-1)F_{k-1}(y)-\sum_{j=0}^{k-1}P_{k-j}(y)F'_{j}(y)}{P_0(y)}\;\quad(2\leq k \leq n)
.\end{align}
It follows in particular that $F'_0\ne 0$ and $F_0$ is bounded  in $\mathcal{D}$. In solving the differential system, the constants of integration are chosen
so that $F_k$ are indeed uniformly bounded in $y$, see \eqref{gk}.
\subsection{Solving for $y(x)$} The expression $C_n$ is an approximate constant
of motion
; we thus can find an approximate solution $y_n$ by fixing $C_n=K$. We then write
\begin{equation}
  \label{eq:eqyn}
G(y;K):=  F_0(y)-K-x+a\log x+\sum_{k=1}^n \frac{F_k(y)}{x^k}=0
\end{equation}
and we note that in the domain relevant to us ($\mathcal{S}_1$, see Theorem \ref{regcom} below) the analytic implicit function theorem applies since
\begin{equation}
  \label{eq:difK}
\frac{\partial G}{\partial y}=\frac{1}{P_0(y)} +\frac{1}{x}E_1(y,x)
\end{equation}
where $P_0$ is away from $0$ in our domain, and for some $E_1$ which is bounded in $\mathcal{D}$ by \eqref{eq:def2c} since $Y$ is bounded.  Writing $y=y_n$ in \eqref{eq:eqyn} and using the analytic implicit function theorem, treating $1/x$ as a small parameter, we get

\begin{equation}
  \label{eq:eqy}
  y_n=G_0(x;K)+\frac{G_1(x;K)}{x}+\cdots+\frac{G_n(x;K)}{x^n} + \frac{\tilde{H}_n(x;K)}{x^{n+1}}
\end{equation}
where  $\tilde{H}_n$ and the $G_j$'s are bounded. In the
same way it is checked that $y_n$ is solution of \eqref{eq:eqy0} up to
corrections $R_n(x;K)/x^{n+1}$, that is,
$y_n'-Q_1(y_n,1/x)=-R_n(x;K)/x^{n+1}$ where $R_n$ is bounded.

Let $p_1,\ldots,p_m$ be the distinct roots of $P_0$.

Let $\mathcal{R}_y$ be
the universal cover of
$Y=\mathbb{C}\backslash\{p_1,...,p_m\}$. Let $\pi:X\rightarrow Y$ be
the covering map.
\begin{Definition}\label{Def4}
$\bullet$ {\rm
An {\bf elementary $\bf y$-path} of type

$$\alpha=(\alpha_1,...,\alpha_m,\alpha_{m+1},\ldots,\alpha_{mk})\in\mathbb{Z}^{mk}, k\in\NN$$ is a piecewise
smooth curve $\gamma$ in $\mathcal{R}_y$ whose image under $\pi$ turns $\alpha_{1}$ times around
$p_1$, then $\alpha_{2}$ times around $p_2$, and so on, $\alpha_{m}$
times around $p_m$, then again $\alpha_{m+1}$ times around $p_1$ , etc. Note that $\alpha$ is in fact an element of the fundamental group.

$\bullet$ A {\bf $\bf y$-path of type $\boldsymbol\alpha$ } is a smooth curve
$\gamma$ obtained as an arbitrary forward concatenation of elementary $
y$-paths of type $\alpha$. More precisely, a $ y$-path
of type $\alpha$ is a map $\gamma:[0,\infty)\to \mathcal{R}_y$ so that, for
any $N\in \mathbb{Z}^+$, $\gamma|_{[N,N+1]}$ is an elementary $
y$-path of type $\alpha$.  We will naturally denote by
$\gamma|_{[0,a]}$ subarcs of $\gamma$. We see that $y$-paths are
compositions of {\em closed loops} in the {\em complex $y$ domain.}

$\bullet$  $\mathcal{S}_r$ is  {\bf a regular domain of type $\boldsymbol\alpha$} or
an $R$-domain of type $\alpha$, if it is an unbounded open subset of
$\mathcal{R}_y$ that contains only images of $
y$-paths of type $\alpha$. Thus the image of any unbounded  $y$-path of type $\alpha'\neq \alpha$ is not a subset of $\mathcal{S}_r$.

{\bf Note.} In our results we only need $ y$-paths with the
additional property that $x(y)\to\infty$ along the path.

 To take a
trivial illustration, in the equation $y'=y$ an example of a $ y$-path
along which $x\to \infty$ is $t\mapsto \exp(it), t\ge 0$.

}\end{Definition}
\section{Main results}
\subsection{Existence of formal constants of motion} Under the assumptions at
the beginning of \S\ref{D1} we have
\begin{Theorem}
\label{regcom}
Let $\mathcal{S}_y$ be an $R$-domain of type $\alpha$, and
$$\mathcal{S}_1=\{y\in \mathcal{S}_y: |\pi(y)|< M_0 ~{\rm and}
~|\pi(y)-p_k|>\epsilon ~{\rm for~all~}k\}$$
where $M_0>0$ is an arbitrary constant.
Let $\mathcal{C}$ be the
union of $m$ circular paths surrounding $\alpha_k\in\ZZ$ times the root $p_k$, $k=1,\ldots,m$,  chosen so that
\begin{equation}
  \label{eq:eqnontr}
\int_{\mathcal{C}}\frac{1}{P_0(y)}dy\neq0
.\end{equation}
  Then, if $R_0$ is large enough, there exists a
formal constant of motion in
$$\mathcal{D}_1=\{(x,y):|x|>R_0,y\in\mathcal{S}_1\}$$ of the form
\eqref{eq:eq0}.
The terms $F_k$ in the expansion of $\tilde{C}$ in \eqref{eq:eq0} can be calculated
by quadratures.
\end{Theorem}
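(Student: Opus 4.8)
The strategy is to build $\tilde{C}$ term by term, converting the requirement $D_x\tilde{C}=O(x^{-\infty})$ into a triangular hierarchy of first order ODEs that are solvable by quadratures, and then to check that the constants of integration can be chosen so that the resulting data meet the boundedness conditions of Definition \ref{def1} on $\mathcal{D}_1$. The only genuinely delicate point will be this last verification, along the unbounded $y$-paths of type $\alpha$.

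By Note \ref{N1} we may fix the $y$-free part as $A(x)=-x+a\log x$ (cf. \eqref{eq:eqA}), leaving only the scalar $a$ undetermined. Substituting the truncation \eqref{eq:def2c} into $D_x=\partial_x+Q_1\partial_y$ and expanding $Q_1=\sum_{k\ge0}P_k(y)x^{-k}$ yields \eqref{formalexpans}; demanding that the coefficient of $x^{-j}$ vanish for every $j$ is exactly the recursion \eqref{refF}--\eqref{dfk}. Equation \eqref{refF} gives $F_0'=1/P_0$, which is holomorphic and bounded on $\mathcal{S}_1$ because $P_0$ is a polynomial whose only zeros, $p_1,\dots,p_m$, lie at distance $\ge\epsilon$ from $\mathcal{S}_1$; $F_0$ is then recovered by one quadrature on $\mathcal{R}_y$, with a logarithmic branch point of residue $1/P_0'(p_k)$ at each $p_k$. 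Condition \eqref{eq:eqnontr} says precisely that the period of $F_0'$ around the composite loop $\mathcal{C}$ is nonzero, which is what makes $A(x)+F_0(y)$ a non-degenerate leading part and guarantees the existence of $y$-paths of type $\alpha$ along which $x\to\infty$ (the Note in Definition \ref{Def4}). For $k\ge1$, \eqref{dfk} expresses $F_k'$ as $P_0^{-1}$ times a polynomial expression in $P_1,\dots,P_k$, $1/P_0$ and $F_1,\dots,F_{k-1}$ together with their derivatives --- note that $F_0$ itself never re-enters, only $F_0'=1/P_0$ does --- so $F_k$ is again obtained by a single quadrature once the earlier terms are in hand. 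This already yields the last sentence of the theorem.

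It remains to pin down $a$ and the additive constants. The scalar $a$ is forced: from \eqref{refF1}, $F_1'=-a/P_0-P_1/P_0^2$, so the period of $F_1'$ over $\mathcal{C}$ equals $-a\int_{\mathcal{C}}P_0^{-1}\,dy-\int_{\mathcal{C}}P_1P_0^{-2}\,dy$, and by \eqref{eq:eqnontr} there is a unique $a$ making it vanish; one verifies that this $a$ coincides with the constant in the transseries of Proposition \ref{trans}, as anticipated in Note \ref{N1}. With $a$ so chosen and the integration constant taken as in \eqref{gk}, $F_1$ has trivial monodromy around $\mathcal{C}$ and is bounded on $\mathcal{S}_1$ (it is bounded on each fundamental sheet by the $\epsilon$-separation from the $p_k$ and $|\pi(y)|<M_0$, and is unchanged under a change of sheet). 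One then proceeds inductively: assuming $F_1,\dots,F_{k-1}$ and their derivatives are bounded on $\mathcal{S}_1$, the numerator in \eqref{dfk} is bounded there, hence so is $F_k'$, and $F_k$ is obtained by quadrature with its constant fixed as in \eqref{gk}. Finally the remainder numerators $H_n$ in \eqref{formalexpans} are series in $1/x$ that converge uniformly for $|x|>R_0$ large, since $\sup_{|\pi(y)|\le M_0}|P_k(y)|$ decays geometrically in $k$ by the analyticity of $Q$ in $z$, with coefficients built from the bounded quantities $P_j$, $1/P_0$ and $F_1',\dots,F_n'$; hence $\tilde{C}$ is a formal constant of motion on $\mathcal{D}_1$ of the form \eqref{eq:eq0}.

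I expect the induction in the last paragraph to be the only real obstacle: the algebra of the recursion, the quadratures, and the convergence of the remainder series are routine, whereas controlling the higher $F_k$ along the unbounded $y$-paths of type $\alpha$ --- ensuring that whatever periods of $F_k'$ around $\mathcal{C}$ are generated at orders $k\ge2$ remain compatible, through the choice of integration constants \eqref{gk}, with the uniform bounds demanded by Definition \ref{def1} --- is where the $R$-domain hypothesis and the non-vanishing period \eqref{eq:eqnontr} are genuinely used.
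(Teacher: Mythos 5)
Your proposal follows essentially the same route as the paper: derive the triangular recursion \eqref{refF}--\eqref{dfk} from \eqref{formalexpans}, solve by successive quadratures, fix $a$ so that the $\mathcal{C}$-period of $F_1'$ vanishes, and then inductively fix the additive constants via \eqref{gk} so that the period of each $F_{k+1}'$ around $\mathcal{C}$ vanishes, which is what keeps the $F_k$ bounded along unbounded $y$-paths of type $\alpha$. You have correctly identified this period-killing as the crux, and your remark that $F_0$ itself never re-enters the recursion (only $F_0'=1/P_0$ does) is a genuine clarifying point, since $F_0$ has nonzero $\mathcal{C}$-period by \eqref{eq:eqnontr} and is therefore not bounded along such paths.

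Two minor points where the paper is sharper. First, your indexing is off by one in the induction step: the constant that kills the period of $F_k'$ is $c_{k-1}$ (the additive constant of $F_{k-1}$), not the constant of $F_k$; \eqref{gk} determines $c_k$ precisely to control $F_{k+1}$, not $F_k$, so ``its constant fixed as in \eqref{gk}'' should be read one level down. Second, the paper replaces your qualitative ``bounded on each fundamental sheet, unchanged under a $\mathcal{C}$-sheet change'' by an explicit quantitative lemma, $|F_k'|\lesssim k!$ and $|F_k|\lesssim k!(|\pi(y)|+1)$, proved by decomposing the integration path into $\mathcal{C}$-loops (contributing zero after the period-killing) plus a bounded arc. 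This also yields the needed bound on the remainder numerator $H_n$ directly, whereas you argue the boundedness of $H_n$ separately via the geometric decay of $\sup|P_k|$. Both routes work; the paper's version is the one that makes the uniformity claim airtight, but your structure is the same.
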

Actual constants of motion are obtained in Theorem \ref{T1}.

Consider now a set  $\mathcal{S}$ of curves  $\gamma$, $|\gamma(t)|\to\infty$ as $t\to\infty$, with the property that  for all  $t_1<t_2$ and all $n$ (which is in fact equivalent to for $n=0,1$ )
\begin{equation}
  \label{eq:restrgam}
  \left| \Re \displaystyle \int_{\gamma(t_1)}^{\gamma(t_2)} \frac{\partial}{\partial y}Q_1(y,\gamma(t))|_{y=y_n(\gamma(t))}\gamma'(t)dt\right|\leqslant b\log(|\gamma(t_2)/\gamma(t_1)|+1)
  ,\end{equation} where $b>0$ is a constant, and such that there is an $M$ so that  for all $n$ we have $|y_n|<M$ along $\gamma$. Here $M$ can be chosen large if $x$ is large. Note that $\mathcal{S}$ contains
the curves $\gamma(t)$ so that  $y_n(\gamma(t))$ is an
$\alpha$-path. Indeed, by \eqref{eq:eqy} ,  in this case, the integrand  in
\eqref{eq:restrgam} is of the form
$\frac{P_0'(y)}{P_0(y)}dy+O(1)\frac{d\gamma(t)}{\gamma(t)}$ and hence the integral equals  $2\pi i N +O(\log(|N|+1))$ for large $N$ where $N$ is the number of loops.

\begin{Theorem}\label{T1} Assume $\tilde{C}$ in \eqref{eq:eq0}
  is a formal constant of motion in a region $\mathcal{D}=\mathcal{S}\cap \mathcal{D}_1$. Then there exists an actual constant of motion $C=C(x,y)$
defined in the same region, so that $C\sim \tilde{C}$ as $x\to\infty$.

\end{Theorem}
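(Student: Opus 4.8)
The plan is to upgrade the formal constant of motion $\tilde C$ to an actual one by a fixed-point / variation-of-constants argument along the field, using the smallness of the remainder term produced by truncation. First I would fix a truncation order $n$ (large, to be chosen) and work with $C_n$ as in \eqref{eq:def2c}, for which \eqref{formalexpans} shows $D_xC_n = H_{n}(x,y)/x^{n+1}$ with $H_n$ bounded on $\mathcal D$. Evaluating along an actual solution $y(x)$ of \eqref{eq:eqy0} lying in $\mathcal D$, this means $\frac{d}{dx}C_n(x,y(x)) = H_n(x,y(x))/x^{n+1}$, so $C_n$ is conserved up to an $O(x^{-n})$ drift. The idea is to correct $C_n$ by a function $\Phi_n(x,y)$, bounded and $o(1)$ as $x\to\infty$, so that $C:=C_n+\Phi_n$ is exactly conserved; by construction $C\sim\tilde C$ then follows since $\Phi_n = O(x^{-n})\to0$ and the tail of $\tilde C$ beyond order $n$ is also $O(x^{-n})$, with $n$ arbitrary.

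The key analytic device is the flow. For $(x_0,y_0)\in\mathcal D$ let $t\mapsto (t,\phi(t;x_0,y_0))$ be the solution of \eqref{eq:eqy0} through $(x_0,y_0)$; by the hypothesis defining $\mathcal S$ this flow stays in (a slightly shrunk) $\mathcal D$ for $t$ along the relevant $\alpha$-path towards infinity, and the variational equation $\partial_{y_0}\phi$ is controlled by $\exp\big(\int \partial_yQ_1\big)$, whose real part grows at most logarithmically by \eqref{eq:restrgam}; hence $\partial_{y_0}\phi$ and its inverse are polynomially bounded in $|x/x_0|$. Define
\begin{equation}
\label{eq:Cflow}
C(x_0,y_0) := \lim_{t\to\infty}\Big(C_n(t,\phi(t;x_0,y_0))\Big) = C_n(x_0,y_0) + \int_{x_0}^{\infty}\frac{H_n(t,\phi(t;x_0,y_0))}{t^{n+1}}\,dt .
\end{equation}
The integral converges absolutely for $n\geq 1$ since $H_n$ is bounded along the flow and $t^{-n-1}$ is integrable; call it $\Phi_n(x_0,y_0)$, and note $|\Phi_n|\leq \mathrm{const}\cdot|x_0|^{-n}$. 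By the group property of the flow, $C$ is constant along every solution in $\mathcal D$: replacing $(x_0,y_0)$ by $(x_1,\phi(x_1;x_0,y_0))$ leaves the limit unchanged. It remains to check that $C$ is analytic (or at least as regular as claimed) in $(x_0,y_0)$: this follows from analytic dependence of $\phi$ on initial data together with uniform convergence of the integral in \eqref{eq:Cflow} on compact subsets of $\mathcal D$, using the variational bounds above to dominate $\partial_{x_0}$ and $\partial_{y_0}$ of the integrand by an integrable function of $t$.

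The main obstacle is controlling the flow globally towards infinity: one must know that solutions started in $\mathcal D$ do not leave the domain before reaching the region where $C_n$ is close to $\tilde C$, and that the variational derivative does not blow up faster than the gain $t^{-n-1}$ from the remainder. This is exactly what the curve class $\mathcal S$ and the condition \eqref{eq:restrgam} are engineered to provide — the bound $|y_n|<M$ keeps the trajectory in the bounded part of $\mathcal S_1$, while \eqref{eq:restrgam} forces $\big|\Re\int\partial_yQ_1\big|\leq b\log(\cdots)$, so $\partial_{y_0}\phi = O(|x/x_0|^{b})$; choosing $n>b$ (recall $n$ is arbitrary) then makes $H_n(t,\phi)\,t^{-n-1}\cdot\partial_{y_0}\phi$ integrable, which is all that is needed. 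A secondary technical point is that $\phi$ lives on the Riemann surface $\mathcal R_y$ and follows an $\alpha$-path, so ``$\lim_{t\to\infty}$'' must be taken along that path; since $C_n$ is built from $F_k$ that are single-valued and bounded on $\mathcal S_1$ (the logarithmic branching is absorbed into the explicit $a\log x$ and the $\arctan/\log$ pieces, which are handled on $\mathcal R_y$), the limit is well defined there. Once these bounds are in place the fixed-point structure is automatic and $C\sim\tilde C$ to all orders because $n$ may be taken as large as desired.
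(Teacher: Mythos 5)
Your route is genuinely different from the paper's. The paper fixes a would-be value $K$ of the constant, writes an ansatz $y = y_n(\cdot;K) + \delta(\cdot;K)$, shows by contraction in the norm $\sup|x|^n|\delta|$ (using \eqref{eq:restrgam} to control the homogeneous kernel $e^{\int \partial_y Q_1}$) that an exact solution $\delta = O(x^{-n})$ exists, then applies the implicit function theorem in the $K$ direction to recover $K = K(x,y)$ as the actual constant of motion. You instead push $C_n$ along the \emph{exact} flow to infinity,
$C(x_0,y_0) = \lim_{t\to\infty} C_n(t,\phi(t;x_0,y_0)) = C_n(x_0,y_0) + \int_{x_0}^\infty H_n(t,\phi)\,t^{-n-1}\,dt$,
which is the classical transport construction of a first integral. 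Your version has a real advantage the paper has to argue separately: the limit is manifestly independent of $n$ (because $C_n - C_{n+1} = -F_{n+1}/x^{n+1} \to 0$), so $C\sim\tilde C$ to all orders comes for free, whereas the paper must appeal to the ``differs from every truncate by $O(x^{-n})$ implies $o(x^{-n})$'' argument.

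There is, however, a genuine gap in your write-up that the paper's approach handles automatically and yours does not. You assert that ``by the hypothesis defining $\mathcal S$ this flow stays in (a slightly shrunk) $\mathcal D$'' and that the variational bound $\partial_{y_0}\phi = O(|x/x_0|^b)$ follows from \eqref{eq:restrgam}. But $\mathcal S$ and \eqref{eq:restrgam} are stated in terms of the \emph{approximate} solutions $y_n$, not the exact flow $\phi$: the bounds $|y_n|<M$ and $|\Re\int \partial_y Q_1(y_n,\cdot)| \le b\log(\cdots)$ do not a priori transfer to $\phi$. To transfer them you need to know that the exact solution through a point of $\mathcal D$ stays $O(x^{-n})$-close to some $y_n$ — which is exactly the content of the paper's contraction estimate for $\delta$. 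So your flow-transport argument as written is circular at the domain-invariance step; it can be repaired by inserting a preliminary lemma (essentially the paper's $\delta$-estimate, or an equivalent Gr\"onwall/bootstrapping argument) establishing that solutions starting in a slightly shrunk $\mathcal D$ remain in $\mathcal D$ along $\alpha$-paths with the stated variational growth. With that lemma in place, your construction of $C$, its smoothness via dominated differentiation under the integral sign, and the $n$-independence all go through cleanly.
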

\subsection{Regions where $P_0(u)$ is small}
Assume $x_0$ is large and $|P_0(y(x_0))|<\epsilon$ is sufficiently
small. This means that for some root $r_k$ of $P_0$ we have
$|y(x_0)-r_k|<\epsilon_1$ where $\epsilon_1$ is also small. Without
loss of generality we can assume that $r_k=0$ and $x\in \RR^+$ since
the change of variables $y_1=y-r_k$, $x=x_1 e^{i\phi}$ does not change
the form of the equation. Assume also that after normalization the
stability condition $\Re P_0'(0)<0$ holds. Again without loss of
generality, by taking $y_2=\alpha y_1$ we can arrange that $P_0'(0)=-1$.
 The new function $Q$ in \eqref{eq:eqy0} will have the form
$y^2Q_1(y,1/x)+x^{-2}Q_2(y,1/x)$ where $Q_1$ and $Q_2$ are analytic
for small $y$ and $1/x$. As a result, the normalized equation assumes
the form
\begin{equation}\label{nf}
  y'=-y+f_0(x)+\frac{ay}{x}+y^2Q_1(y,1/x)+x^{-2}Q_2(y,1/x)
.\end{equation}
We also arrange that $f_0=O(x^{-M})$  as $x\to\infty$, for suitably large $M$; this is possible
through a change of variables of the form $y_2=y_3+\sum_{k=1}^M c_k
x^{-k}$, where the $c_k$'s are the coefficients of the formal power
series solution for small $y$.
\begin{Proposition}
\label{trans}
[see \cite{Duke} Theorem 3] Any solution of \eqref{nf}  that is $o(1)$ as $x\to\infty$ along some
ray in the right half plane can be written as a
  Borel summed transseries, that is
\begin{equation}
  \label{eq:eqtrans}
  y(x)=\sum_{k=0}^{\infty}C^k x^{ka+1}e^{-kx} y_k
\end{equation}
where $y_k$ are generalized Borel
sums of their asymptotic series, and the decomposition is unique. There exist
bounds, uniform in $n$ and $x$, of the form $|y_n(x)|<A^k$, and thereby the sum converges uniformly
 in a region $\mathcal{R}$ that contains
any sector $\mathcal{S}_c:=\{x:|\arg\, x|<c<\pi/2\}$.
Note that Theorem 3 in  \cite{Duke} applies to general n-th order ODEs.
\end{Proposition}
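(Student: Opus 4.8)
This is the first-order case of Theorem~3 of \cite{Duke}, so the plan is to recall the structure of that argument in the present setting. \emph{Step 1 (formal transseries).} Seek a solution of \eqref{nf} of the form \eqref{eq:eqtrans}, where, once the algebraic prefactors $x^{ka+1}$ are pulled out, each $y_k$ is a formal power series in $1/x$. Substituting into \eqref{nf} and matching powers of the small exponential $e^{-x}$ yields a \emph{triangular} hierarchy of linear ODEs: $y_0$ is the formal power series solution, which is $O(x^{-M})$ because of the normalizations $f_0=O(x^{-M})$ and the form $y^2Q_1+x^{-2}Q_2$ of the nonlinearity in \eqref{nf}; $y_1$ comes from the linearized homogeneous equation of \eqref{nf} about $y_0$ and is determined up to an overall scale, which is precisely the constant $C$; and for each $k\ge2$, $y_k$ solves an inhomogeneous linear equation whose forcing is a polynomial in the $y_j$, $j<k$, and whose only formal power series solution is unique (the homogeneous solutions are exponentials, not power series). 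Recursive estimates give Gevrey-$1$ bounds for the $y_k$, factorial in the $1/x$-order and geometric in $k$, of the shape $|y_{k,j}|\le A^kB^jj!$.

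\emph{Step 2 (Borel summability --- the core).} Apply the Borel transform in $x$ to each $y_k$; by the Gevrey-$1$ bounds the transforms $\hat y_k(p)$ are analytic in a disk about $p=0$, and the hierarchy of ODEs becomes a hierarchy of convolution equations. Analyzing these along rays in the $p$-plane one shows that each $\hat y_k$ continues analytically along $\RR^+$ with singularities only at the positive integers $p\in\{1,2,\dots\}$ (the actions of the exponentials $e^{-nx}$), these singularities being mild (integrable, of logarithmic/power type), and with subexponential growth on rays, $|\hat y_k(p)|\le A^k e^{\nu|p|}$. The generalized Laplace transform (the balanced average of the lateral transforms, when singularities actually lie on $\RR^+$) then produces functions $y_k(x)$ analytic in a region $\mathcal R$ extending beyond the right half plane, asymptotic to the formal $y_k$, and satisfying the \emph{uniform} bound $|y_k(x)|<A^k$. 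Since $|C^kx^{ka+1}e^{-kx}|=|C|^k|x|^{ka+1}e^{-k\Re x}$, the multiseries \eqref{eq:eqtrans} converges uniformly on every sector $\mathcal S_c$ with $c<\pi/2$, and---the convergence being strong enough to differentiate termwise, and each $y_k$ solving its equation---its sum is an actual solution of \eqref{nf}.

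\emph{Step 3 (every small solution, uniquely).} If $y$ is a solution with $y=o(1)$ along a ray in the right half plane, then $y$ stays near $y_0$ and $w:=y-y_0$ is exponentially small; analyzing the linearized dynamics forces $w\sim C\,x^{a+1}e^{-x}$ for a \emph{unique} constant $C$, and a contraction argument on $y$ minus the partial sums of \eqref{eq:eqtrans}---exploiting the successive exponential gains $e^{-x},e^{-2x},\dots$---shows that $y$ equals the sum \eqref{eq:eqtrans} for that $C$, giving both existence and uniqueness of the decomposition. The real obstacle is Step~2: establishing that the singularities of the $\hat y_k$ in the Borel plane remain at the positive integers and stay mild, that the growth along rays is subexponential \emph{uniformly in} $k$, and that the (possibly averaged) Laplace transform returns genuine solutions in exact correspondence with the classical ones. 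These are exactly the estimates carried out in \cite{Duke}, and the first-order, rank-one structure of \eqref{nf} is what keeps them tractable.
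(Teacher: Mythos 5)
The paper does not give its own proof of this Proposition: it is cited verbatim from Theorem~3 of \cite{Duke}, and the paper immediately uses it as a black box in the proof of Proposition~\ref{ptranss}. There is therefore no ``paper's proof'' against which to compare your argument. That said, your reconstruction is a faithful high-level sketch of the argument in \cite{Duke}: the triangular hierarchy obtained by matching powers of $e^{-x}$, the uniform-in-$k$ Gevrey-$1$ estimates, the Borel transform turning the hierarchy into convolution equations with singularities at the positive integers, the (balanced/median) Laplace transform producing genuine solutions when $\RR^+$ is a Stokes line, and finally the contraction argument that identifies $C$ and shows that every small solution is captured. You correctly flag Step~2 --- specifically the uniformity in $k$ of the Borel-plane growth and singularity estimates --- as the real technical content; that is indeed where the bulk of the work in \cite{Duke} lies, and the first-order rank-one structure is what keeps the combinatorics of convolutions manageable. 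Two small notational points worth being aware of: the paper's displayed bound $|y_n(x)|<A^k$ contains an index typo (you correctly read it as $|y_k(x)|<A^k$), and the prefactor $x^{ka+1}$ with the $+1$ is a normalization artifact tied to the paper's observation that $y_1(x)=x^{-1}(1+o(1/x))$, so the $k=0$ term does reduce to the ordinary power-series solution.
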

\begin{Proposition}\label{ptranss}{
(i)  If, after the normalization above, $y(x_0)$ is small (estimates
can be obtained from the proof), then
$y$ is given by \eqref{eq:eqtrans}.

(ii)  $C(y(x),x)$, obtained by inversion of (\ref{eq:eqtrans}) for large $x$ in the right half plane and small $y$,  is a constant of motion defined for all
solutions for which $y(x_0)$ is small  (cf. (i)).}
\end{Proposition}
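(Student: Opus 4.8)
The plan is to deduce (i) from an \emph{a priori} estimate showing that a solution which is small at a large $x_0$ stays small and tends to $0$ along $\RR^+$, after which Proposition~\ref{trans} supplies the representation \eqref{eq:eqtrans}; then (ii) follows by inverting \eqref{eq:eqtrans} with the analytic implicit function theorem and using the uniqueness of the transseries parametrization.

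For (i): after the normalization leading to \eqref{nf} we may take the ray to be $[x_0,\infty)\subset\RR^+$. Using the integrating factor $\mu(x)=e^{x}x^{-a}$ for the linear part $y'=(-1+a/x)y$ of \eqref{nf}, the equation becomes
\begin{multline*}
 y(x)=e^{x_0-x}(x/x_0)^{a}\,y(x_0)\\
 +\int_{x_0}^{x}e^{s-x}(x/s)^{a}\Big(f_0(s)+y(s)^2Q_1(y(s),1/s)+s^{-2}Q_2(y(s),1/s)\Big)\,ds .
\end{multline*}
Because the stability condition $\Re P_0'(0)<0$ has been normalized to $P_0'(0)=-1$, for $x_0\le s\le x$ the kernel $e^{s-x}(x/s)^{a}$ is bounded by a power of $x/s$ times $e^{s-x}\le 1$ and concentrates near $s=x$; hence $\int_{x_0}^{x}e^{s-x}(x/s)^{a}s^{-2}\,ds=O(x^{-2})$ and $\int_{x_0}^{x}e^{s-x}(x/s)^{a}|f_0(s)|\,ds=O(x^{-M})$, while the quadratic term contributes $O(\sup_{[x_0,x]}|y|^{2})$ and the first term is exponentially small for $x\gg x_0$. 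A continuous-induction (bootstrap) argument then shows that if $|y(x_0)|$ and $1/x_0$ are small enough the solution exists on all of $[x_0,\infty)$, remains small, and in fact $y(x)\to0$ as $x\to\infty$. Since $y=o(1)$ along a ray in the right half plane, Proposition~\ref{trans} gives \eqref{eq:eqtrans} with a unique $C$; tracking the constants in this argument also yields the quantitative smallness thresholds referred to in the statement (roughly, $|C|$ small requires $|y(x_0)|\lesssim e^{-x_0}x_0^{\,a+1}$).

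For (ii): regard the right-hand side of \eqref{eq:eqtrans} as a function $\Phi(C,x)$, analytic in $C$ on a disk $|C|<\rho$ for $x$ large in a sector $\mathcal{S}_c$ of the right half plane; its value at $C=0$ is the small (Borel-summed) power-series solution of \eqref{nf}, and $\partial_C\Phi(0,x)$ is the tangent vector at $C=0$ to this one-parameter family, hence a solution of the equation linearized about $\Phi(0,x)$. Since that linear equation is $v'=(-1+a/x+o(1))v$, its solutions behave like $e^{-x}x^{a}$, so $\partial_C\Phi(0,x)\neq0$ for large $x$ (explicitly, $x^{a+1}e^{-x}y_1(x)$ by \eqref{eq:eqtrans}, so $y_1$ is asymptotically nonzero). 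The analytic implicit function theorem, treating $1/x$ and $e^{-x}$ as small parameters, then produces $C=C(x,y)$ analytic on the set of $(x,y)$ with $x$ large in $\mathcal{S}_c$ and $y$ in the (exponentially thin) image $\Phi(\{|C|<\rho\},x)$, which is precisely the region swept out by the solutions of (i). Finally, for any such solution part (i) gives a unique $C_\ast$ with $y(x)\equiv\Phi(C_\ast,x)$; injectivity of $\Phi(\cdot,x)$ then forces $C(x,y(x))=C_\ast$ for all $x$, i.e.\ $C(x,y(x))$ is constant along the solution. Hence $C$ is a constant of motion on the indicated domain, which proves (ii).

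The main obstacle is the \emph{a priori} estimate in (i): one must verify that the convolution kernel $e^{s-x}(x/s)^{a}$ genuinely damps every forcing term down to $o(1)$ and that the nonlinearity --- entire in $y$ but controlled only for small $y$ --- does not break the bootstrap. This is exactly where the sign condition $\Re P_0'(0)<0$ is essential and where the quantitative meaning of ``$y(x_0)$ small'' is pinned down. Once the non-degeneracy $\partial_C\Phi(0,x)\neq0$ and the uniqueness in Proposition~\ref{trans} are in hand, part (ii) is essentially formal.
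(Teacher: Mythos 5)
Your overall route is the same as the paper's: write \eqref{nf} in integral form with the integrating factor $e^{-x}x^{a}$, deduce from smallness at $x_0$ that the solution stays small and tends to $0$ along the ray so that Proposition~\ref{trans} applies, and then invert the transseries via the analytic implicit function theorem. For (i), the paper replaces your bootstrap by a contraction-mapping argument in the weighted norm $\|y\|=\sup |x^{M-1}y(x)|$ on $\mathcal{S}_c$, which is a standard variant of the same idea; both are acceptable, though the weighted norm makes the decay rate $y=O(x^{-(M-1)})$ explicit in one step rather than by iteration.

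There is, however, a genuine (if small) gap in your argument for (ii). You establish that $\partial_C\Phi(0,x)$ solves the variational equation $v'=(-1+a/x+o(1))v$ and conclude ``its solutions behave like $e^{-x}x^{a}$, so $\partial_C\Phi(0,x)\neq 0$.'' But the zero function also solves that linear equation, so nothing so far excludes $\partial_C\Phi(0,x)\equiv 0$ (equivalently $y_1\equiv0$). Your follow-up remark --- that ``explicitly, $\partial_C\Phi=x^{a+1}e^{-x}y_1(x)$, so $y_1$ is asymptotically nonzero'' --- is circular: it uses the non-vanishing you are trying to prove. What closes the gap is the normalization built into the transseries construction of \cite{Duke}, namely $y_1(x)=x^{-1}(1+o(1/x))$, and this is exactly how the paper proceeds: it cites \cite{Duke} for this asymptotics and then computes $\partial H/\partial C=e^{-x}x^{a}(1+o(1))\neq 0$ directly from the uniformly convergent series. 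So either cite that normalization explicitly or argue that the parametrization $C\mapsto\Phi(C,x)$ in Proposition~\ref{trans} is, by construction, a non-degenerate analytic chart near $C=0$.
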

 \begin{proof}
(i) We write the differential equation in the equivalent integral form
\begin{multline}
  \label{eq:intfor}
  y=F_0(x)+y_0 e^{-(x-x_0)}(x/x_0)^a\\ + e^{-x}x^a \int_{x_0}^x e^s s^{-a} \left[ y^2(s)Q_1(y(s),1/s)+s^{-2}Q_2(y(s),1/s) \right]ds
,\end{multline}
where $F_0(x)=O(x^{-M})$ ($M$ can be chosen arbitrarily large in the normalization process, \cite{Duke}) and $F_0(x_0)=0$. It is straightforward to show that for
(\ref{eq:intfor}) is contractive in the norm $\|y\|=\sup_{x\in\mathcal{S}_c}|x^{M-1} y(x)|$
(see the beginning of this section) and thus it has a unique solution in this space. Hence, by uniqueness, the solution
of the ODE with  $y(x_0)=y_0$, has the property $y(x)\to 0$ as $x\to\infty$.  The rest of (i) now follows from
 \cite{Duke}.

 (ii)  We see from Proposition \ref{trans} that $y(x;C)$ is analytic
in a domain of the form  $\mathcal{S}_c\times \mathbb{D}_\rho$ (As usual, $\mathbb{D}_\rho$ denotes the disk of radius $\rho$.)
We look at the rhs of \eqref{eq:eqtrans} as a function $H(x,C)$.
It follows from \cite{Duke} that $y_1(x)=x^{-1}(1+o(1/x))$. By uniform convergence, we clearly have
\begin{equation}
  \label{eq:difh}
  \frac{\partial H}{\partial C}=\sum_{k=0}^{\infty}kC^{k-1} x^{ka+1}e^{-kx} y_k=e^{-x}x^a(1+o(1))\ne 0
.\end{equation}
The rest follows from the implicit function theorem.
\end{proof}
As a result of Theorem \ref{T1} and
Proposition \ref{ptranss} we have the following:
 \begin{Corollary}\label{C1}
 If $G_0$ in \eqref{eq:eqy} approaches a root of $P_0$ and $x$ is large enough, then
$y$ enters a transseries region, where the new  constant is given,
after normalization, by Proposition \ref{ptranss} (ii); thus the
constants of motion in different regions match.
 \end{Corollary}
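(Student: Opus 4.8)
The plan is to reduce the statement to the hypotheses of Proposition~\ref{trans}, apply Proposition~\ref{ptranss} to produce the new constant, and then show that on the overlap of the two domains this new constant is functionally equivalent (hence ``matches'') the constant of motion of Theorem~\ref{T1}.

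First I would use \eqref{eq:eqy}: since $y=G_0(x;K)+O(1/x)$ uniformly, the hypothesis $G_0(x;K)\to r_k$, with $r_k$ one of the simple roots $p_1,\dots,p_m$ of $P_0$, forces the solution itself into an arbitrarily small neighbourhood $|y-r_k|<\epsilon_1$ once $|x|$ exceeds some $x_0$, and $|x_0|$ may be taken as large as we like. Next I would extract the stability condition that Proposition~\ref{trans} needs. From \eqref{eq:eqyn}, $G_0$ satisfies $F_0(G_0)=x+K-a\log x+o(1)$; since $F_0'=1/P_0$ and $r_k$ is simple, $F_0(y)=P_0'(r_k)^{-1}\log(y-r_k)+h(y)$ with $h$ analytic at $r_k$, so $G_0-r_k$ is of order $e^{P_0'(r_k)(x+K)}$ times a power of $x$. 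For $G_0-r_k\to 0$ along the path one therefore needs $\Re\big(P_0'(r_k)\,x\big)\to-\infty$ along it, and after the rotation $x\mapsto xe^{-i\phi}$ aligning the terminal direction of the path with $\RR^+$ this is exactly $\Re P_0'(r_k)<0$. Thus the approach to a root automatically picks out a stable root and a direction of approach inside its stability sector.

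Then I would perform the normalization preceding Proposition~\ref{trans}: translate $r_k$ to $0$, rescale so that $P_0'(0)=-1$, and apply the near-identity substitution $y_2=y_3+\sum_{k=1}^{M}c_kx^{-k}$ removing $f_0$ down to order $x^{-M}$, bringing \eqref{eq:eqy0} to the form \eqref{nf}. These are analytic, linear or near-identity maps, so a constant of motion of \eqref{eq:eqy0} is carried to one of \eqref{nf}. Since $y(x_0)$ is small and $x_0$ is large, Proposition~\ref{ptranss}(i) applies and gives $y(x)\to 0$ as $x\to\infty$ with $y$ represented by the Borel-summed transseries \eqref{eq:eqtrans}; that is, $y$ has entered a transseries region. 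Proposition~\ref{ptranss}(ii) then yields the associated constant $\widehat C=C(y(x),x)$, obtained by inverting \eqref{eq:eqtrans}, defined throughout that region.

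Finally I would match $\widehat C$ with the constant of motion $C$ of Theorem~\ref{T1}. Both are defined and invariant along solutions on the overlap $\{\epsilon\le|y-r_k|\le\epsilon_1,\ |x|>R_0\}$, which is nonempty once $\epsilon<\epsilon_1$; since \eqref{eq:eqy0} is first order its solution curves foliate this region, so $\widehat C$ is constant on each level set of $C$, whence $\widehat C=\Phi(C)$ for an analytic $\Phi$. Comparing leading behaviours --- $\widetilde C\sim -x+a\log x+F_0(y)+\cdots$ with $F_0(y)\sim-\log(y-r_k)$ after normalization, against $\log\widehat C\sim x-a\log x+\log(y-r_k)+\cdots$ read from \eqref{eq:eqtrans} together with $y_1(x)\sim 1/x$ --- shows $\Phi(C)=e^{-C}$ up to a bounded nonzero analytic factor, which is the asserted matching. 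The hard part will be making this last step rigorous: one must control the solution across the handoff strip, where the implicit-function representation \eqref{eq:eqy} degenerates because $P_0(G_0)\to 0$ just as the transseries representation of Proposition~\ref{ptranss} is becoming valid, and verify that the two asymptotic normalizations of the independent variable genuinely agree there rather than being merely formally compatible.
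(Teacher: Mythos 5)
The paper states Corollary~\ref{C1} as an immediate consequence of Theorem~\ref{T1} and Proposition~\ref{ptranss} and does not supply a separate proof, so your argument is best read as a filling-in of the implied logic, and it does so correctly. The chain you build --- $G_0\to r_k$ forces $y\to r_k$ via \eqref{eq:eqy}; the relation $F_0(G_0)\sim x+K-a\log x$ together with $F_0(y)\sim P_0'(r_k)^{-1}\log(y-r_k)$ forces $\Re\big(P_0'(r_k)\,x\big)\to-\infty$, which after rotation is exactly the stability hypothesis of the normalization preceding \eqref{nf}; Proposition~\ref{ptranss} then yields both the transseries representation and the associated constant; and the foliation of the first-order field by solution curves forces $\widehat C=\Phi(C)$ on the overlap --- is precisely what the corollary records, and your leading-order identification $\Phi(C)\sim e^{-C}$ is consistent with \eqref{newton0}. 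Your closing caveat about the handoff strip is the right concern but is resolved by a standard observation you stopped just short of making: in Theorem~\ref{regcom} the exclusion radius $\epsilon$ in $\mathcal{S}_1$ is a free parameter, so choosing it strictly smaller than the radius $\rho$ of the disk $\mathbb{D}_\rho$ from Proposition~\ref{ptranss}(ii) guarantees a nonempty annulus $\epsilon<|y-r_k|<\rho$ where both constants of motion are simultaneously defined and nondegenerate, and the functional relation on that annulus follows from the foliation exactly as you argue.
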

\section{Proofs and further results}

\subsection{Proof of Theorem \ref{regcom}}
Let $(x_0,y_0)\in\mathcal{D}_1$. Recalling  (\ref{formalexpans}), we
see that \eqref{refF} has the solution
$$F_0(y)=\int_{y_0}^y\frac{1}{P_0(s)}ds+c_0$$
(we take $c_0=0$ since it can be absorbed into the constant of motion).  Eq. \eqref{refF1} gives
\begin{equation}\label{aaa}F_1(y)=f_1(y)+c_1:=-\int_{y_0}^y\frac{a+\frac{P_1(s)}{P_0(s)}}{P_0(s)}ds+c_1,\end{equation}
where to ensure boundedness of $F_1(y)$ as the number of loops $\to\infty$, we let
$$a=-\frac{\int_{\mathcal{C}}\frac{P_1(y)}{P_0(y)^2}dy}{\int_{\mathcal{C}}\frac{1}{P_0(y)}dy}$$
and $c_1$ is determined to ensure boundedness of $F_2$ (cf. \eqref{gk}). Inductively we have
\begin{equation}
\label{fk}
F_{k+1}(y)=\int_{y_0}^{y}\frac{k(f_k(s)+c_k)-\sum_{j=0}^{k}P_{{k+1}-j}(s)F'_{j}(s)}{P_0(s)}ds+c_{k+1}=:f_{k+1}(y)+c_{k+1}
\end{equation}
for $2\leq k+1\leq n$, and, to ensure boundedness of $F_{k+1}(y)$ as the number of loops $\to\infty$ we need to choose
\begin{equation}
\label{gk}
c_{k}=\dfrac{\int_{\mathcal{C}}\dfrac{-kf_{k}+\sum_{j=0}^{k}P_{k+1-j}(y)f'_{j}(y)}{P_0(y)}dy}{k\int_{\mathcal{C}}\dfrac{1}{P_0(y)}dy}
\end{equation}
for $1\leq k\leq n-1$.

It is clear by induction that every singularity of $F_k(y)$ is a root of $P_0$.
To complete the proof we need to show that the $F_k$'s are bounded in $\mathcal{D}_1$:
\begin{Lemma} Assume $y\in\mathcal{S}_1$.
For $\deg(P_0)\geq1$ and $1\leq k\leq n$ we have
$$|F_k'(y)|\lesssim k!\ \ \ \ \text{and}\ \ \ \ |F_k(y)|\lesssim k!(|y|+1)$$
where, as usual, $\lesssim $ means $\le $ up to an irrelevant multiplicative constant.
\end{Lemma}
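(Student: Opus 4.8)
The plan is to prove the bound by strong induction on $k$, tracking the two estimates $|F_k'(y)|\lesssim k!$ and $|F_k(y)|\lesssim k!(|y|+1)$ simultaneously. For the base cases, note that $F_0'=1/P_0$, and since $y\in\mathcal{S}_1$ stays away from the roots $p_1,\ldots,p_m$ and $|\pi(y)|<M_0$, the function $1/P_0$ is bounded there; integrating along the $y$-path from $y_0$ to $y$ and using that the $c_k$'s are bounded (these are fixed period integrals over $\mathcal{C}$, finite because $\int_{\mathcal{C}}P_0^{-1}\,dy\ne 0$ by \eqref{eq:eqnontr}) gives $|F_0(y)|\lesssim |y|+1$. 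The same reasoning with \eqref{aaa} handles $F_1$, where the extra point is that $a$ is a fixed constant and $P_1/P_0^2$ is bounded on $\mathcal{S}_1$ since $P_1$ is a fixed polynomial and $|y|<M_0$.

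For the inductive step I would use the recursion \eqref{dfk}:
\begin{equation*}
F_k'(y)=\frac{(k-1)F_{k-1}(y)-\sum_{j=0}^{k-1}P_{k-j}(y)F_j'(y)}{P_0(y)}.
\end{equation*}
The denominator $P_0(y)$ is bounded below on $\mathcal{S}_1$, so it suffices to bound the numerator. The term $(k-1)F_{k-1}(y)$ is $\lesssim (k-1)\cdot(k-1)!(|y|+1)\le k!(|y|+1)$ by the inductive hypothesis. For the sum, each $F_j'(y)$ is $\lesssim j!\le (k-1)!$, and the coefficients $P_{k-j}(y)$ are the coefficients of the analytic function $Q_1(y,z)$, hence satisfy Cauchy-type bounds: there are constants $A,B$ (depending only on the radius of analyticity in $z$ and on $M_0$) with $\sup_{|y|<M_0}|P_i(y)|\le AB^i$. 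Thus the sum over $j=0,\ldots,k-1$ is bounded by a constant times $\sum_{j=0}^{k-1}B^{k-j}(j!)$, which is dominated by a constant times $k!$ once one checks the ratios — the dominant term is the $j=k-1$ one and the geometric tail is summable. Combining, the numerator is $\lesssim k!(|y|+1)$, so $|F_k'(y)|\lesssim k!(|y|+1)$; but since $\deg P_0\ge 1$, for $|y|$ large the factor $1/|P_0(y)|$ actually decays, absorbing the $(|y|+1)$ and yielding the cleaner $|F_k'(y)|\lesssim k!$. Then $|F_k(y)|\lesssim k!(|y|+1)$ follows by integrating $F_k'$ from $y_0$ to $y$ along the $y$-path (whose length grows at most linearly in $|y|$ locally) and adding the bounded constant $c_k$, whose bound \eqref{gk} again involves only fixed period integrals and the quantities $f_k, P_i$ already controlled.

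The main obstacle is the bookkeeping in the convolution-type sum $\sum_{j=0}^{k-1}P_{k-j}F_j'$: one must show that the product of the factorially-growing $F_j'$ bounds and the geometrically-growing $P_i$ bounds does not produce anything worse than $k!$. This is the familiar phenomenon that $\sum_{j} B^{k-j} j! \lesssim k!$, valid because $j!/(k!)\le 1/k$ for $j\le k-1$ and the series $\sum_{i\ge 1} B^i/k^{\,?}$ — more precisely one writes the sum as $k!\sum_{j=0}^{k-1} B^{k-j} (j!/k!)$ and bounds $j!/k! \le 1/(j+1)\cdots k \le B^{-(k-j)}$ once $k$ is large enough that every factor exceeds $B$, leaving a bounded geometric remainder for the finitely many small-$k$ terms. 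A secondary point to be careful about is that "bounded in $\mathcal{S}_1$" genuinely uses all three defining constraints of $\mathcal{S}_1$ (distance $\epsilon$ from roots, bound $M_0$ on $|\pi(y)|$, and the $y$-path structure controlling the integration contour); I would state these dependencies explicitly so the implied constants in $\lesssim$ are seen to depend only on $\alpha$, $M_0$, $\epsilon$, and the equation, not on $k$ or $n$.
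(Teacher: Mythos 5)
Your proof is correct and follows essentially the same route as the paper's: induction on $k$ driven by the recursion \eqref{dfk}, bounding $|P_0|^{-1}$ below on $\mathcal{S}_1$ and integrating along the $y$-path to pass from the bound on $F_k'$ to the bound on $F_k$. Where you go beyond the paper is in tracking the uniformity of the implied constants in $k$: the paper simply writes $|F_k'(y)|\lesssim \frac{(k-1)|F_{k-1}(y)|}{|P_0(y)|}+\sum_{j=0}^{k-1}|F_j'(y)|$ and declares ``the conclusion then follows by induction,'' implicitly absorbing $|P_{k-j}(y)/P_0(y)|$ into a constant that must not depend on $k-j$; you instead invoke Cauchy estimates $\sup_{|y|<M_0}|P_i(y)|\le AB^i$ and verify that the convolution $\sum_{j=0}^{k-1}B^{k-j}j!$ is still $\lesssim k!$. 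That verification is the right extra step, and it does close the induction (one minor slip: in your final paragraph ``small-$k$ terms'' should read ``small-$j$ terms,'' i.e.\ the finitely many $j<B$ for which the factorwise estimate $(j+1)\cdots k\ge B^{k-j}$ fails; for those $j$ the quantity $B^{k-j}j!/k!$ still tends to $0$ as $k\to\infty$, so they contribute a bounded amount). In short, your argument supplies detail the paper leaves implicit, but it is the same induction.
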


\begin{proof}
We prove the lemma by induction on $k$. Note that in (\ref{aaa}) and (\ref{fk}) the integration paths can be decomposed into finitely many circular loops $\mathcal{C}$ and a ray, slightly deformed around possible singularities, which implies
$$|F_1(y)|\lesssim \log|y|+1\lesssim |y|+1$$
and
$$|F_k(y)|\lesssim \left|\int_{y_0}^{y}|F_k'(s)|ds\right|$$
where the integration path is a straight line (possibly  bent
as above).

We see from (\ref{dfk}) that
$$|F_k'(y)|\lesssim \frac{(k-1)|F_{k-1}(y)|}{|P_0(y)|}+\sum_{j=0}^{k-1}|F_j'(y)|\lesssim \frac{(k-1)|F_{k-1}(y)|}{|y|+1}+\sum_{j=0}^{k-1}|F_j'(y)|.$$
The conclusion then follows by induction. Note that the the last term of \eqref{formalexpans} satisfies
$$\left|-nF_n+\sum_{j=0}^{n}\sum_{k=0}^{\infty}P_{n+k+1-j}F'_{j}x^{-k}\right|\lesssim (n+1)!(|y|+1)|P_0(y)|$$

\end{proof}

\subsection{Proof of Theorem~\ref{T1}} Let $y(x;K)=y_n(x;K)+\delta(x;K)$, where $y_n$ is given in  \eqref{eq:eqy}. We seek  $\delta$ so that $y$ is an exact solution of \eqref{eq:eqy0} in $\mathcal{D}$.

 Let $\phi(y,\delta)$ be the polynomial satisfying $Q_1(y+\delta,x)-Q_1(y,x)=Q_{1,y}(y,x)\delta+\delta^2 \phi(y,\delta,x)$ where $Q_{1,y}(y,x):=\frac{\partial Q_1(y,x)}{\partial y}$. We obtain
\begin{equation}
  \label{eq:del}
  \delta'-\frac{b\delta}{x}-\frac{\partial Q_1(y,x)}{\partial y}\delta=\frac{R(x;K)}{x^{n+1}}-\frac{b\delta}{x}+\phi(y_n,\delta,x)\delta^2=:E(x;\delta(x);K)
,\end{equation}
 where  $R=:R_n$ is defined after \eqref{eq:eqy}; both $R$ and $\phi$ are, by assumption,
bounded. In integral form, \eqref{eq:del}  reads
\begin{equation}
  \label{eq:deli}
\delta(x)=\int_{\infty}^x \frac{x^b}{s^b}e^{\int_{s}^xQ_{1,y}(y_n(t),t)dt} E(s;\delta(s);K)ds
\end{equation}
where the integrals are taken along curves in $\mathcal{D}$.  Using \eqref{eq:restrgam} we see that  (\ref{eq:deli}) is contractive in the
norm $\|\delta\|=\ds \sup_{|x|\geqslant |x_1|;
  x\in\mathcal{D}}|x|^{n}|\delta(x)|$ in an arbitrarily large ball,  if $|x_1|$ is large enough and
$n>b2^{b+1}$.

Thus \eqref{eq:deli} has a unique solution and, of course, $\delta(x)$
is the limit of the Picard like iteration
\begin{multline}
  \label{eq:eqar}
\delta_0=\int_{\infty}^x \frac{x^b}{s^b} e^{\int_{s}^xQ_{1,y}(y_n(t),t)dt} \frac{R(s;K)}{s^{n+1}}ds\\
\delta_1=\int_{\infty}^x \frac{x^b}{s^b} e^{\int_{s}^xQ_{1,y}(y_n(t),t)dt} E(s;\delta_0(s);K) ds\\
etc.
\end{multline}
By \eqref{eq:deli} $\delta$ is a smooth function depending
on $(x, K)$ only, and  $\delta=O(x^{-n})$. Smoothness is shown as usual by bootstrapping the integral representation \eqref{eq:deli}.

Now we
have, by \eqref{eq:difK}, $\partial_K y_n(x;K)=P_0(y_n)(1+o(1))$. We
can easily check that $\partial_K\delta(x,K)=O(x^{-n})$. This is done
using essentially the same arguments employed to check contractivity
of the integral equation for $\delta$ in the equation in variations
for $\delta_K$, derived by differentiating \eqref{eq:del} with respect
to $K$. We use the implicit function theorem to solve for $K$, giving
$K=K(x,y)$, a smooth function of $(x,y)$. It has the following
properties: $K(x,y(x))$ is by construction constant along admissible
trajectories and by straightforward verification, i.e. comparing $K$
with $\tilde{C}$, we see that it is asymptotic to $\tilde{C}$ up to
$O(x^{-n})$. It is known that if a function differs from the $n$th
truncate of its series by $O(x^{-n})$ for large $n$, then in fact the
difference is $o(x^{-n})$ (cf.  \cite{OCBook} Proposition 1.13 (iii)).

\subsection{Position of singularities of the solution} It is convenient to introduce constants of motion specific to singular
regions; they provide a practical way to determine the position
of singularities, to all orders.
\begin{Definition}
We define a {\bf simple singular solution path} $\gamma(s):[0,1)\rightarrow \mathcal{R}_y$ to be a piecewise smooth curve whose projection $\pi(\gamma[0,1))\in \CC$ is unbounded but turns around every $p_k$ only finitely many times.

A {\bf simple singular solution domain} $\mathcal{S}_s$ is the homotopy class of any simple singular solution path, in the sense that any two unbounded paths in $\mathcal{S}_s$ can be continuously deformed into each other without passing through any $p_k$.
\end{Definition}

\begin{Proposition}\label{sincom}
Let $m_0=\deg(P_0)\geq 2$, $\mathcal{S}_s$ be a simple singular solution domain, and $\mathcal{D}_2=\{(x,y):|x|>R,y\in\mathcal{S}_s, ~{\rm  and} ~|y-p_k|>\epsilon ~{\rm for~all~}k\}$. Assume that $$\frac{|P_k(y)|}{|P_0(y)|}\lesssim |y|^{-q}$$ for large $y$, for some $q\geq 0$ and all $k\geq 1$. Note that this needs only be true in $\mathcal{S}_s$, which could be an angular region.

Then there exists in $\mathcal{D}_2$ a formal constant of motion of the form
\begin{equation}
    \label{eq:com1}
    \tilde{C}(y,x)=x+F_0(y)+\frac{F_1(y)}{x}+\cdots+\frac{F_j(y)}{x^j}+\cdots
  ,\end{equation}
 where $F_k(y)$ are single valued as $y\to\infty$.
Furthermore, any simple singular solution path passing through some arbitrary $(x_0,y_0)$ tends to a singularity, whose position $x_{sing}$ satisfies
\begin{equation}
\label{sing}
x_{sing}= C_n(y_0,x_0)+O\left(\frac{1}{x_0^{n+1}}\right)
\end{equation}
for all $n\in\mathbb{N}$, where $C_n$ is $\tilde{C}$ truncated to $x^{-n}$.

Moreover, if there are only finitely many nonzero $P_k$, then there exists in $\mathcal{D}_2$ a true constant of motion of the form (\ref{eq:com1}), i.e. the sum is convergent for large $|x|$.

\end{Proposition}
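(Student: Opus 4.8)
The plan is to establish Proposition~\ref{sincom} in three stages: first construct the formal constant of motion $\tilde C$ of the form \eqref{eq:com1}, then use it to locate $x_{sing}$, and finally upgrade $\tilde C$ to an actual constant of motion when only finitely many $P_k$ are nonzero. The first stage parallels the proof of Theorem~\ref{regcom}, but with the leading term $A(x)=+x$ rather than $-x$: substituting \eqref{eq:com1} into $D_x\tilde C=0$ and matching powers of $1/x$ gives $F_0'(y)=-1/P_0(y)$, and then recursions of the same shape as \eqref{refF1}--\eqref{dfk} for $F_k$, $k\ge 1$. The crucial difference from Theorem~\ref{regcom} is that there is no ``number of loops'' obstruction: in a simple singular solution domain the path winds only finitely many times around each $p_k$, so the constants of integration $c_k$ are free, and we fix them by demanding single-valuedness as $y\to\infty$. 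This is where the hypothesis $|P_k(y)|/|P_0(y)|\lesssim |y|^{-q}$ enters: since $\deg P_0=m_0\ge 2$, the function $F_0(y)=-\int^y P_0(s)^{-1}ds$ is bounded and single-valued near $\infty$ (the residue at $\infty$ vanishes because $1/P_0$ decays like $|y|^{-m_0}$ with $m_0\ge 2$), and inductively each $F_k$ is a primitive of a rational-type expression decaying at $\infty$, so choosing $c_k$ to kill the logarithmic part at infinity makes $F_k$ single-valued there. One checks by induction, as in the Lemma, that the $F_k$ and the remainder $H_n$ are bounded on $\mathcal{D}_2$, which is exactly the statement that $\tilde C$ is a formal constant of motion.

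For the location of the singularity, I would argue as follows. Along a simple singular solution path the solution $y(s)\to\infty$ as $s\to x_{sing}$ (a root of $P_0$ cannot be approached, since that is a smallness region, not a singular one; and $y$ must leave every compact set because $\mathcal{S}_s$ is unbounded and the winding is finite). Near such a blow-up, $y'\approx P_0(y)$ dominates, so the ``time to infinity'' is $\int^{\infty}\!ds/P_0\cdot(\text{finite})$, i.e. finite precisely because $m_0\ge 2$; this is $-F_0(y)$ to leading order. More precisely, since $C_n(y,x)=x+F_0(y)+\sum_{k=1}^n F_k(y)/x^k$ satisfies $D_x C_n = O(x^{-n-1})$ uniformly, integrating along the path from $(x_0,y_0)$ to the singular endpoint gives $C_n(y(x_{sing}),x_{sing})-C_n(y_0,x_0)=O(x_0^{-n-1})$ (the integral of the $O(x^{-n-1})$ error over a path on which $|x|\gtrsim |x_0|$). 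As $y\to\infty$ the finite quantities $F_k(y)$ tend to definite limits which, by our normalization of the $c_k$, are $0$; hence $C_n(y(x_{sing}),x_{sing})\to x_{sing}$, and \eqref{sing} follows. I would present this as: evaluate the (approximate) conservation law at the two endpoints and take the limit, being careful that the error term is genuinely $O(x_0^{-(n+1)})$ and not merely $o(x_0^{-n})$ — but this is the same ``if the difference from the $n$th truncate is $O(x^{-n})$ for all $n$ then it is $o(x^{-n})$'' argument already used at the end of the proof of Theorem~\ref{T1} (cf.\ \cite{OCBook} Proposition 1.13(iii)).

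For the final claim, when only finitely many $P_k$ are nonzero the equation \eqref{eq:eqy0} is polynomial in $1/x$, and one can seek an exact $C$ with $C\sim\tilde C$ by the same perturbative fixed-point scheme as in the proof of Theorem~\ref{T1}: write $y=y_n+\delta$, derive the integral equation \eqref{eq:deli} for $\delta$ with the weight $e^{\int Q_{1,y}}$, and show contractivity in the norm $\sup |x|^n|\delta|$. The estimate \eqref{eq:restrgam} that powered that argument here comes for free: along a simple singular path the integrand $Q_{1,y}(y_n,t)$ is, by \eqref{eq:eqy0} and the decay hypothesis, of the form $P_0'(y)/P_0(y) + O(|y|^{-1-q})\,(\text{lower order})$, and $\int P_0'/P_0 = \log P_0(y)$ is single-valued near $\infty$ with at most logarithmic growth in $|x|$ since the path winds finitely often; so the real part of the exponent is bounded by $b\log(|x|)+$const, exactly the hypothesis of Theorem~\ref{T1}'s proof. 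Finiteness of the set of nonzero $P_k$ is what makes the resulting series for $C$ actually converge for large $|x|$ (the remainders $R_n$ no longer pick up the infinite tail in \eqref{formalexpans}), giving a true constant of motion of the form \eqref{eq:com1}.

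I expect the main obstacle to be the single-valuedness of the $F_k$ at $y=\infty$: one must verify that the inductive formula for $F_k'$ produces an expression whose residue at infinity can be annihilated by the choice of $c_{k-1}$, and that this choice is consistent with the bounds needed for the next step. The decay hypothesis $|P_k|/|P_0|\lesssim |y|^{-q}$ is tailored to make each $F_k'$ decay fast enough at $\infty$ that its primitive has only an isolated logarithmic/constant ambiguity there, but tracking the exact orders through the recursion — and confirming they do not deteriorate — is the delicate bookkeeping. Everything else (boundedness in $\mathcal{D}_2$, the fixed-point contraction, the passage from formal to actual) is a routine adaptation of arguments already carried out for Theorems~\ref{regcom} and~\ref{T1}.
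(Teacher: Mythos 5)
Your treatment of the first two parts of the proposition closely matches the paper's proof. The recursion $F_0'=-1/P_0$, $F_k'=[(k-1)F_{k-1}-\sum_{j<k}P_{k-j}F_j']/P_0$ is identical, and the paper, like you, fixes the integration constants by integrating from $y=\infty$, exploiting $\deg P_0\ge 2$ to get convergence and single-valuedness there (no residue at $\infty$). The singularity formula is also obtained by the same device: evaluate $C_n$ at the two endpoints of the simple singular path and bound $\int D_xC_n$. One small point you gloss over: the paper's bound is $|D_xC_n|\lesssim |P_0(y)|/(|x|^{n+1}|y|^{m_0+q})$, and the crucial factor $|P_0(y)|$ is what allows you to convert the $dx$-integral into a convergent $dy$-integral along the solution path (since $|P_0(y)|\,|dx|\sim |dy|$); merely asserting $D_xC_n=O(x^{-n-1})$ uniformly and that the $x$-path has length $O(1)$ is a slightly circular shortcut, although it can be repaired.

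Where you genuinely diverge from the paper is in the final claim (finitely many nonzero $P_k$). The proposition asserts that \emph{the displayed series itself} converges, and the paper proves this by sharpening the inductive Lemma: when $P_k=0$ for $k>k_0$, the bound on $F_k$ improves from factorial, $|F_k(y)|\lesssim k!/|y|^{m_0+q-1}$, to geometric, $|F_k(y)|\lesssim c^k/|y|^{m_0+q-1}$, via a careful induction on the form $|F_l'(y)|\le (c_0 l_0)^l\sum_j\binom{l}{j-1}|y|^{-1+j(1-m_0)-q}$. That geometric bound is the whole mechanism behind the convergence of $\sum F_k(y)/x^k$, and consequently $D_xC=\lim D_xC_n=0$, so the sum is itself a true constant of motion. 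Your proposed fixed-point argument à la Theorem \ref{T1} produces \emph{some} actual constant of motion asymptotic to $\tilde C$, but does not deliver the stronger claim that the series \eqref{eq:com1} converges and equals that constant; and your remark that ``the remainders $R_n$ no longer pick up the infinite tail'' identifies a necessary but not the operative feature — the essential point is the factorial-to-geometric upgrade in the $F_k$ bounds. So the last part of your proposal has a real gap: it establishes a weaker statement than the one the proposition actually makes.
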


\begin{proof}
The proof is similar to that of Theorem \ref{regcom}.

In order for $\tilde{C}$ to be a formal constant of motion, we must have
\begin{align}
F_0'(y)&=-\frac{1}{P_0(y)}\\
\label{dfk1}
F_k'(y)&=\frac{(k-1)F_{k-1}(y)-\sum_{j=0}^{k-1}P_{k-j}(y)F'_{j}(y)}{P_0(y)}\;\quad(1\leq k\leq n).
\end{align}

We solve successively for the $F_k$  and obtain
$$F_0(y)=\int_{\infty}^y\frac{1}{P_0(s)}ds$$
where the integration path lies in $\mathcal{S}_s$. Clearly $F_0$ is bounded and single valued as $y\to\infty$.

Inductively we have
\begin{equation}
\label{fk1}
F_k(y)=\int_{\infty}^{y}\frac{(k-1)F_{k-1}(s)-\sum_{j=0}^{k-1}P_{k-j}(s)F'_{j}(s)}{P_0(s)}ds
\end{equation}
for $1\leq k\leq n$.

To prove the rest of the proposition, we need the following lemma:

\begin{Lemma} Assume that $y\in\mathcal{S}_s$.
For $1\leq k\leq n$ we have
$$|F_k'(y)|\lesssim \frac{k!}{|y|^{m_0+q}}$$
$$|F_k(y)|\lesssim \frac{k!}{|y|^{m_0+q-1}}$$
as $y\rightarrow\infty$.

Furthermore, if $P_k=0$ for $k>k_0>0$, then
$$|F_k'(y)|\lesssim \frac{c^k}{|y|^{m_0+q}}$$
$$|F_k(y)|\lesssim \frac{c^k}{|y|^{m_0+q-1}}.$$
\end{Lemma}

\begin{proof}
The estimates are obtained  by induction on $k$.
Note that (\ref{dfk1}) implies
$$|F_k'(y)|\lesssim \frac{(k-1)|F_{k-1}(y)|}{|y|^{m_0}}+|y|^{-q}\sum_{j=0}^{k-1}|F'_{j}(y)|$$
 provided that the assumptions of the lemma hold for $1\leq j\leq k-1$.

If $P_k=0$ for $k>k_0>0$, we again show the lemma by induction.

\z Assume that for $0<l\leq k$ we have
$$|F_l'(y)|\leq(c_0 l_0)^l \sum_{j=1}^{l+1}\binom {l}{j-1}|y|^{-1+j(1-m_0)-q}$$
(this is obviously true for $l=1$).

This implies
$$|F_k(y)|\leq(c_0 k_0)^k\sum_{j=1}^{k+1}\binom {k}{j-1}\frac{|y|^{j(1-m_0)-q}}{j(m_0-1)+q}$$

Thus it follows from (\ref{dfk1}) that

$$F_{k+1}'(y)=\frac{kF_{k}(y)-\sum_{j=\max\{k-k_0+1,0\}}^{k}P_{k+1-j}(y)F'_{j}(y)}{P_0(y)}$$
where, by the induction assumption, the first term satisfies  the estimate
\begin{multline}
\left|\frac{kF_{k}(y)}{P_0(y)}\right|\leq c_1 k \left|\frac{F_{k}(y)}{y^{m_0}}\right|\\
\leq c_0^k k_0^{k+1} c_1\sum_{j=2}^{k+2}\frac{(k+1)\binom {k}{j-2}}{(j-1)(m_0-1)}|y|^{-1+j(1-m_0)-q}\\
\leq c_0^k k_0^{k+1} c_1 \sum_{j=2}^{k+2}\binom {k+1}{j-1}|y|^{-1+j(1-m_0)-q}
\end{multline}
where $c_0>1+c_1$. Note that the last inequality follows from
$$(k+1)\binom {k}{j-2}=\binom {k+1}{j-1}(j-1).$$
The second term is easy to estimate, since it is clearly bounded by
$$k_0 (c_0 k_0)^k \sum_{j=1}^{k+1}\binom {k}{j-1}|y|^{-1+j(1-m_0)-q}.$$
Since $c_1$ is fixed, we can assume that $c_0>1+c_1$, and we have
$$|F_{k+1}'(y)|\leq(c_0 k_0)^{k+1} \sum_{j=1}^{k+2}\binom {k+1}{j-1}|y|^{-1+j(1-m_0)-q}.$$
This shows the second part of the lemma.
\end{proof}

Now since $$|D_x C_n|=\Big|\frac{-nF_n+\sum_{j=0}^{n}\sum_{k=0}^{\infty}P_{n+k+1-j}F'_{j}x^{-k}}{x^{n+1}}\Big|
\lesssim \frac{|P_0(y)|}{|x|^{n+1}|y|^{m_0+q}}$$ (cf. \ref{formalexpans}), the estimate for $x_{sing}$ follows immediately from integrating $D_xC_n$ from $x_0$ to $x_{sing}=C_n(\infty,x_{sing})$ along the simple singular solution path.

\end{proof}

\begin{Remark}
The condition $$\frac{|P_k(y)|}{|P_0(y)|}\lesssim |y|^{-q}$$ is not the most general one for which there exists a formal constant of motion in a simple singular domain. However, this condition is frequently satisfied by ODEs that occur in applications (see \S\ref{sable}). In such cases we can easily use (\ref{sing}) to find the position of the singularity (see e.g. (\ref{asing})).

\end{Remark}

\section {Example: the nonintegrable Abel equation \eqref{eq:Abel}}\label{sable}
To illustrate how to obtain information of the solution of a first order ODE using Theorem \ref{regcom} and Proposition \ref{sincom},
we take as an example the nonintegrable Abel equation
 \eqref{eq:Abel}. Normalization is achieved by
the transformation $x=-(9/5)A^2 t^{5/3}$, $A^3=1$, $u(t)=A^{3/5}(-135)^{1/5}x^{1/5}y(x)$ \cite{Invent}, yielding
\begin{equation}
\label{abel}
y'=-3y^3+\frac{1}{9}-\frac{y}{5x}.
\end{equation}

Obviously (\ref{abel}) satisfies the assumptions in Theorem \ref{regcom} and \ref{sincom},
 with $P_0(y)=-3y^3+\ds\frac{1}{9}$ and $P_1(y)=-\ds\frac{y}{5}$.

The three roots of $P_0$ are $\ds\frac{1}{3},~\ds\frac{(-1)^{2/3}}{3}$, and $\ds\frac{(-1)^{4/3}}{3}$. It is known \cite{Invent} that there exists a solution in the right half plane $\mathbb{H}$ that goes to the root $\ds\frac{1}{3}$ as $x\to\infty$. Similarly, there are solutions that go to the other two roots in other regions, which we will explore in \S\ref{phase}. In those cases, the behavior of the solution follows from Proposition \ref{trans} (see also \cite{Invent}). However, there are also solutions that do not go to any of the three roots. In these cases, the formal constant of motion will be a useful tool to describe quantitatively the behavior of the solution.

\subsection{Constants of motion in $R$-domains} \label{S4} (cf. Definition \ref{Def4}).
First we choose an elementary solution path along which the solution $y$ to (\ref{abel}) turns around the root $\ds\frac{1}{3}$ clockwise as shown in Fig \ref{fig:abel1} and \ref{fig:abel2}.

\begin{figure}[ht!]
\includegraphics[scale=0.5]{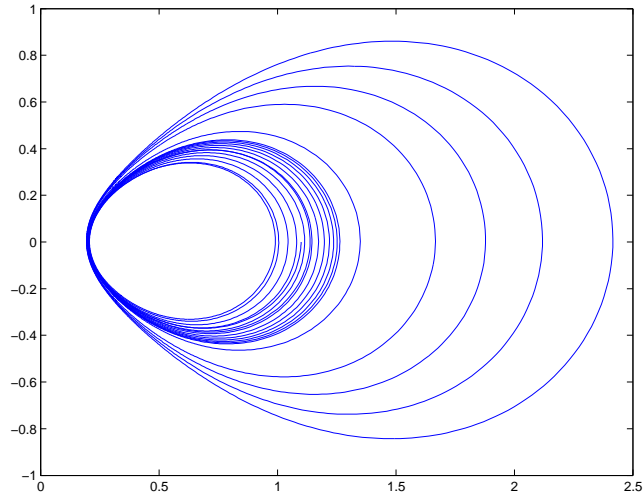}
\caption{Solution $y(x)$ with $y_0=1.1$ along the line segments from 1+5i to 1.5+50i to 1.6+120i}
\label{fig:abel1}
\end{figure}

\begin{figure}[ht!]
\includegraphics[scale=0.5]{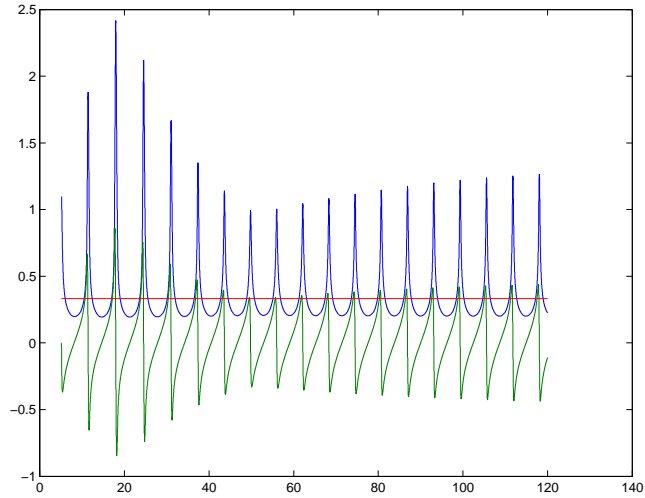}
\caption{Real and imaginary parts of $y(x)$. The upper curve is the real part, the lower curve is the imaginary part, and the straight line is the root $1/3$.}
\label{fig:abel2}
\end{figure}

For simplicity we calculate the first two terms of the expansion (\ref{eq:def2c}). We have
\begin{align}\label{f36}
F_0(y)&=\int\frac{1}{-3y^3+\frac{1}{9}}dy=\sqrt{3}\arctan \left(\frac{6y+1}{\sqrt{3}}\right)-\log(3y-1)+\frac{1}{2}\log(9y^2+3y+1)\\
a&=\dfrac{\int_{\mathcal{C}}\frac{y}{5(-3y^3+\frac{1}{9})^2}dy}{\int_{\mathcal{C}}\frac{1}{-3y^3+\frac{1}{9}}dy}=\frac{1}{5}\\
F_1(y)&=-\int\dfrac{\frac{1}{5}-\frac{y}{5(-3y^3+\frac{1}{9})}}{-3y^3+\frac{1}{9}}dy
=\frac{1}{10}\left(\frac{54y^2}{1-27y^3}-4\sqrt{3}\arctan \left(\frac{6y+1}{\sqrt{3}}\right)\right)+\frac{1}{25}
,\end{align}
where the constant $\ds\frac{1}{25}$ is found using (\ref{gk}).

We plot the first two orders of the formal constant of motion in Fig. \ref{fig:abel3}.

\begin{figure}[ht!]
\includegraphics[scale=0.6]{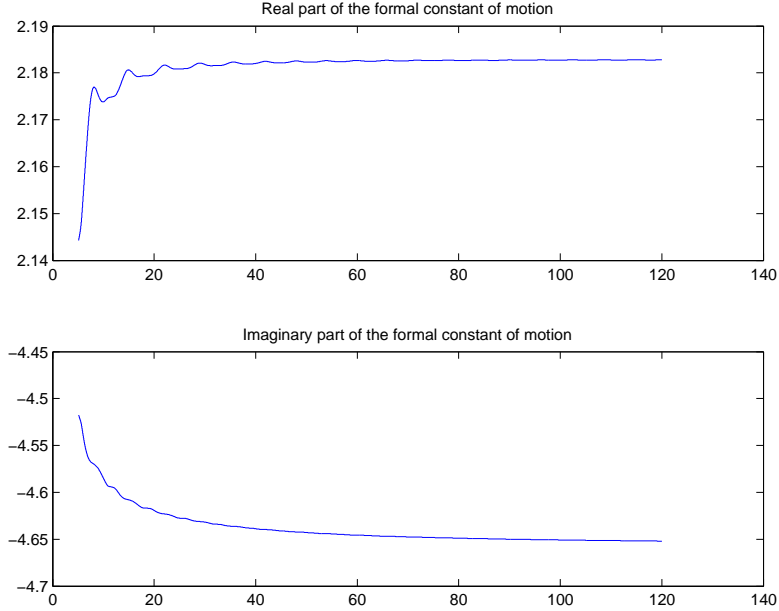}
\caption{Formal constant of motion with $F_0$ and $F_1$.}
\label{fig:abel3}
\end{figure}

Since this formal constant of motion is almost a constant along any path in the same $R$-domain, it can be used to find the solution
asymptotically, writing
$$C=-x+\frac{1}{5}\log x+F_0(y)+\frac{F_1(y)+O(1/x)}{x}.$$
 Placing the term $\log(3y-1)$ (cf. \eqref{f36}) in the equation above on the left side and $C$ on the right side, taking the exponential, and solving for $y$, we obtain
\begin{multline}
\label{newton}
y=\frac{1}{3}\exp\bigg(-C-x+\frac{1}{5}\log x+\left(\sqrt{3}-\frac{2\sqrt{3}}{5x}\right)\arctan \left(\frac{6y+1}{\sqrt{3}}\right)\\
+\frac{1}{2}\log(9y^2+3y+1)+\frac{1}{x}\left(\frac{27y^2}{5(1-27y^3)}+\frac{1}{25}+O(1/x)\right)\bigg)+\frac{1}{3}
.\end{multline}

The reason for taking the exponential in (\ref{newton}) is to take
care of the branching due to $\log x$, whereas the other $\log$ and $\arctan$ do not matter
since the solution does not encircle their singularities. Equation \eqref{newton} contains, in an implicit form, the solution $y$ to two orders in $x$. $y$ can
be determined from this implicit equation in a number of ways; we chose,
for simplicity to numerically solve the implicit equation using Newton's method. The solution is plotted
in Fig. \ref{fig:abel4}, where we take
$C=2.18-4.65i$ and calculate the solution for the second half of the
path corresponding to $|x|>61.4$. Note that the relative error is
within $1.5\%$.

Since the accuracy of the formal constant of motion is unaffected by
going along the solution path as long as $|x|$ is large, we can obtain
quantitative behavior of the solution for very large $|x|$. By
contrast, in a numerical approach,
the further one integrates along
the path, the less accurate the calculated solution becomes.

\begin{figure}
\includegraphics[scale=0.64]{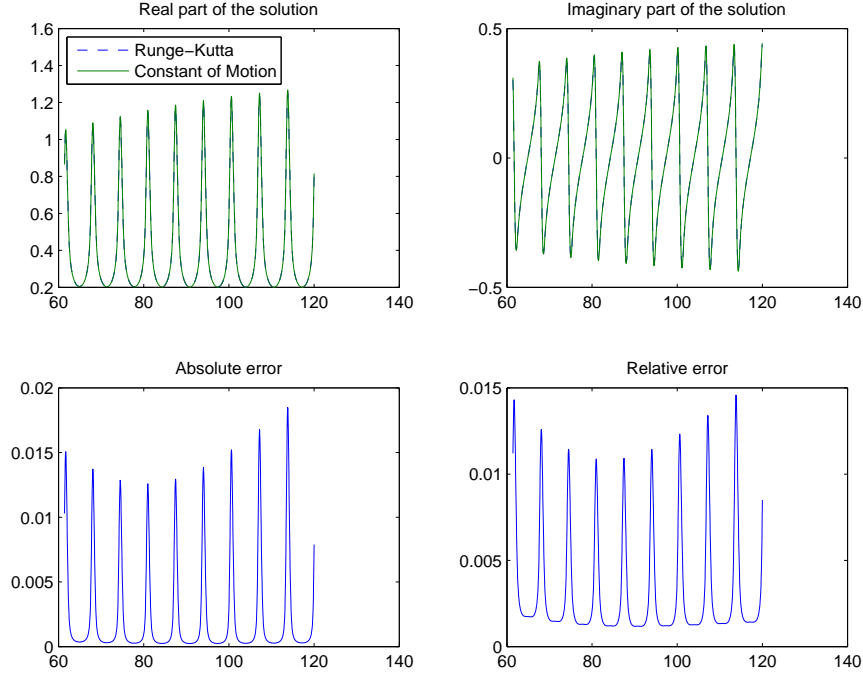}
\caption{Comparison of solutions obtained numerically by the Runge-Kutta method and using the formal constant of motion .}
\label{fig:abel4}
\end{figure}

\begin{figure}
\includegraphics[scale=0.5]{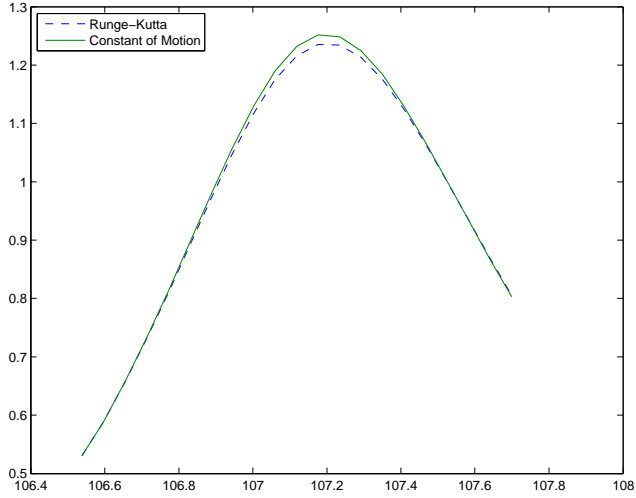}
\caption{A small section of the left-top plot in Fig. \ref{fig:abel4}.}
\label{fig:abel4z}
\end{figure}

\subsection{Finding the positions of the singularities}
We illustrate how to find singularities of the Abel's equation using Proposition \ref{sincom}.
It is known \cite{Invent} that there are only square root singularities, and they appear in two arrays.

For simplicity we choose a simple singular path along which $y$ goes to $+\infty$.

According to Proposition \ref{sincom} we have
\begin{multline}\label{F0}
F_0(y)=-\int_{\infty}^{y}\frac{1}{-3s^3+\frac{1}{9}}ds\\=-\sqrt{3}\arctan (\frac{6y+1}{\sqrt{3}})+\log(3y-1)-\frac{1}{2}\log(9y^2+3y+1)+\frac{\sqrt{3}\pi}{2}\\
F_1(y)=-\frac{1}{5}\int\frac{y}{(-3y^3+\frac{1}{9})^2}dy\\
=\frac{-\frac{54y^2}{1-27y^3}+2\sqrt{3}\arctan (\frac{6y+1}{\sqrt{3}})+2\log(3y-1)-\log(9y^2+3y+1)}{10}
.\end{multline}

Thus the position of the singularity is given by the formula
\begin{multline}
\label{asing}
x_1 =C+o(1)=x_0-\sqrt{3}\left(1-\frac{1}{5x_0}\right)\left(\arctan \left(\frac{6y_0+1}{\sqrt{3}}\right)-\frac{\pi}{2}\right)\\
+\left(1+\frac{1}{5x_0}\right)\left(\log(3y_0-1)-\frac{1}{2}\log(9y_0^2+3y_0+1)\right)-\frac{27y_0^2}{5x_0(1-27y_0^3)}+o(1)
,\end{multline}
 where the initial condition $(x_0,y_0)$ satisfies $|x_0|$ is large and $y_0$ is not close to any of the three roots. We note that the presence of the arctan in the leading order implies that the solutions
remain quasi-periodic beyond the domain accessible to the methods in
 \cite{Invent}.
In (\ref{asing}) we have the freedom of choosing branch of $\log$ and $\arctan$, which enables us to find arrays of singularities.

For example, the position of a singularity corresponding to the
initial condition $x_0=10+60i,~y_0=0.7+0.3i$, calculated using
(\ref{asing}) is $x_1=9.80628+60.2167i$, which is accurate with six significant digits, as checked numerically.

The detailed behavior of the solution near the singularity can be
found by expanding the right hand side of (\ref{asing}). We omit the
calculation here since there are many other methods to determine this
behavior (cf. \cite{Invent}) and it is of lesser importance to the
paper.

\subsection{Connecting regions of transseries}\label{phase}

We choose a path consisting of line segments The path in $x$ consists of line segments connecting $50i$, $50$, $-50i$, $-50$, $50i$, $50$, $-50i$, and $-50(\sqrt{3}+i)$. This corresponds to an angle of $2\pi$ in the original variable, with initial condition $y(50i)=0.6$.

Along this path, the solution of (\ref{abel}) approaches all three complex cube roots of $1/27$. For instance, the root $1/3$ is approached when $x$ traverses the first quadrant along the first segment, the root ${(-1)^{4/3}}/{3}$ is approached when $x$ goes to the lower half plane, and the root ${(-1)^{2/3}}/{3}$ is approached when $x$ goes back to the upper half plane. Some of these values are approached more than once along the entire path. This behavior can easily be shown using the phase portrait of $G_0$, cf. \eqref{eq:eqy} and Corollary \ref{C1}.

Note that along a straight line $x=x_0+x e^{t i}$  where the angle $t$ is fixed the leading term (with only $G_0$ on the right hand side) of the ODE (\ref{abel}) can be written as
$$\frac{d y}{d x}=e^{t i}\left(-3y^3-\frac{y}{5(x_0+x e^{t i})}+\frac{1}{9}\right).$$

Denoting $y_1=\Re{y}$ and $y_2=\Im{y}$, we have

$$\left\{
  \begin{array}{ll}
    \dfrac{d y_1}{d x}=-3 y_1^3\cos t-3 y_2^3\sin t+9  y_1 y_2^2\cos t+9  y_1^2 y_2\sin t+\ds\frac{\cos t}{9}\\
    \dfrac{d y_2}{d x}=-3 y_1^3\sin t+3 y_2^3\cos t+9  y_1 y_2^2\sin t-9  y_1^2 y_2\cos t+\ds\frac{\sin t}{9}
  \end{array}
\right.$$

We can then analyze the phase portraits. For the purpose of illustration, we show some of them in Fig \ref{fig:abel7} and \ref{fig:abel8}.

\begin{figure}[ht!]
\includegraphics[scale=0.8]{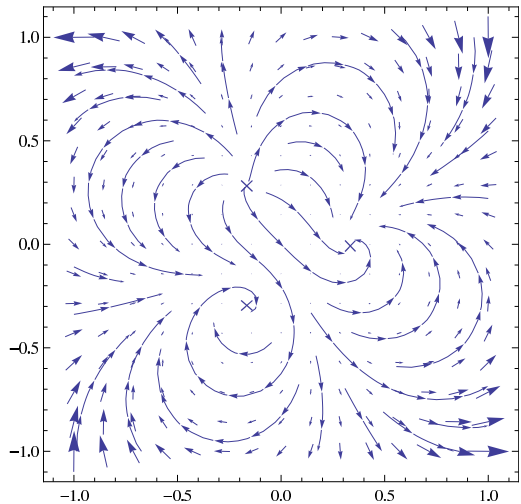}
\caption{Phase portrait of $\Re(y)$ and $\Im(y)$ for $t=-\pi/4$. The $``\times"$ marks are the three roots.}
\label{fig:abel7}
\end{figure}

On  the line segment connecting $50i$ and $50$, it is clear that the initial condition $0.6$ is in the basin of attraction of $1/3$ (cf. Fig. \ref{fig:abel7}).

\begin{figure}[ht!]
\includegraphics[scale=0.8]{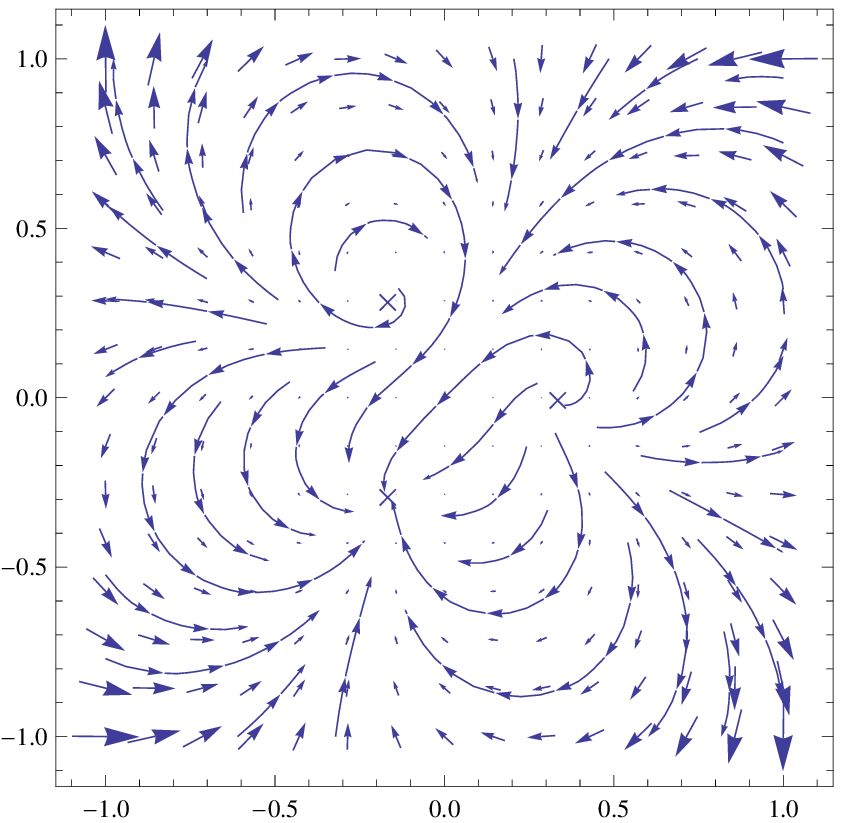}
\caption{Phase portrait of $\Re(y)$ and $\Im(y)$ for $t=5\pi/4$.}
 \label{fig:abel8}
\end{figure}
Since the only stable equilibrium is $a_0=\ds\frac{(-1)^{4/3}}{3}$, on the line segment connecting $50$ and $-50i$  the solution converges to $a_0$ (cf. Fig. \ref{fig:abel8}).

Numerical calculations confirm this, (cf. Fig \ref{fig:abel6}).

\begin{figure}[ht!]
\includegraphics[scale=0.6]{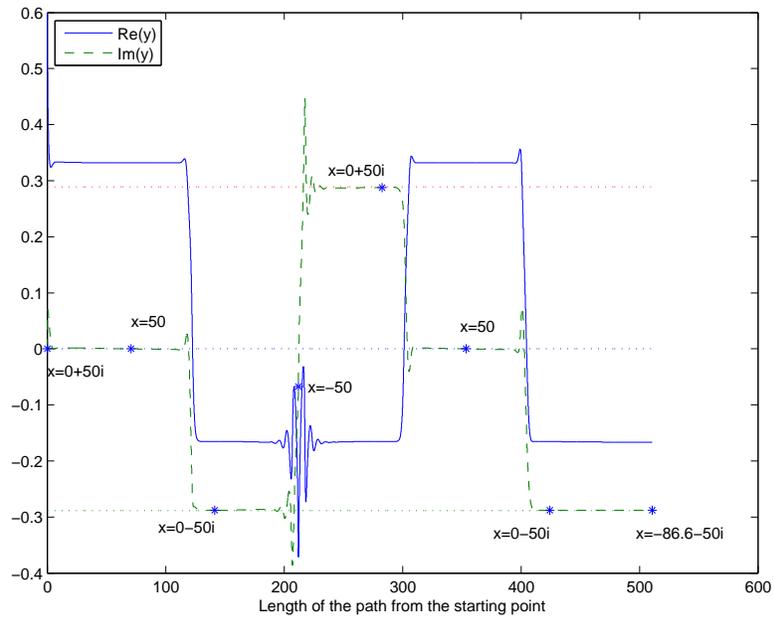}
\caption{Behavior of the solution across transseries regions. Dotted horizontal lines are the imaginary parts of the three roots. The horizontal line is arclength. The path in $x$ consists of line segments connecting $50i$, $50$, $-50i$, $-50$, $50i$, $50$, $-50i$, and $-50(\sqrt{3}+i)$. This corresponds to an angle of $2\pi$ in the original variable.}
\label{fig:abel6}
\end{figure}
\begin{Note}[Absence of limit cycles]{\rm }
  Finally, note that there cannot be a limit cycle in the phase
portraits drawn if $x$ goes along a straight line. If the solution
$y$ approaches a limit cycle, it must lie in an $R$-domain. Thus the
formal constant motion formula (\ref{eq:def2c}) is valid, and the
first term $F_0$ specifies a direction for $x$. If $x$ goes strictly
along this direction towards $\infty$ then the term $a\log x$, which
does not vanish in our case, will go to $\infty$, contradicting the
results about the constant of motion. On the other hand, if $x$ goes
in a different direction, then $-x+F_0(y)$ goes to $\infty$ much
faster than $a\log x$, again a contradiction.
\end{Note}

\subsection{Extension to higher orders} For higher orders, such as the
Painlev\'e equations P1 and P2, a similar procedure works, though the details
are quite a bit more complicated, and we leave them for a subsequent work.
We illustrate, without proofs, the results  for $P1$, $y''=6y^2+z$. Now,  there are two asympotic constants of motion,
as expected. The normal form we work with is $u''+u'x^{-1}-u-u^2/2-392x^{-4}/625=0$. Denoting by $s$ the ``energy of elliptic functions'' $s={u'}^2/2-u^3/3+u^2$
(it turns out that $s$ is one of the bicharacteristic variables of the
sequence of now PDEs governing the terms of the expansion; thus the pair
$(u,s)$ is preferable to $(u,u')$),  one constant
of motion has the asymptotic form
$$C_1=x-L(s,u)+x^{-1}K_1(s,u)+\cdots$$
In the above, denoting  $R=\sqrt{u^3/3+u^2+s}$, $L$ is an incomplete elliptic integral, $L=\int R^{-1}(s,u)du$
and the integration is following a path
winding around the zeros of $R$. The functions $K_1$, $K_2$, $\cdots$ have similar but
longer expressions. We note the absence of a term of the form $a\log x$ (the reason for this is easy to see once the calculation is performed). A second
constant can now be obtained by reduction of order and applying
the first order techniques, or better, by the ``action-angle'' approach
described in the introduction. It is of the form
$$C_2=xJ(s)+[L(s)J(u,s)-J(s)L(u,s)]+x^{-1}\tilde{K}_1+\cdots$$
where $J(u,s)=\int R(s,u)du$; when the variable $u$ is missing from
$J(u,s)$ or $R(u,s)$, this simply means that we are dealing with
complete elliptic integrals. There is directionality in the
asymptotics, as the loops encircling the singularities need to be
rigidly chosen according to the asymptotic direction studied. A
slightly different representation allows us to calculate the constants
to all orders.  Because of directionality, a different asymptotic
formula exists and is more useful for the ``lateral connection'', that
is, for calculating the solution along a circle of fixed but large
radius, which will be detailed in a separate paper, as part of the
Painlev\'e project, see e.g. \cite{Painleve22}.

\section{Acknowledgments} The authors are very grateful to R. Costin for
a careful reading of the manuscript and numerous useful suggestions.  OC's work was partially supported by the NSF grants DMS 0807266 and  DMS  0600369.

\end{document}